\newcommand{\ci}[1]{_{ {}_{\scriptstyle #1}}}
\newcommand{\D}{\mathbb{D}}
\newcommand{\T}{\mathbb{T}}
\newcommand{\Z}{\mathbb{Z}}
\newcommand{\C}{\mathbb{C}}
\newcommand{\bI}{\mathbf{I}}
\newcommand{\bB}{\mathbf{B}}
\newcommand{\dd}{\mathrm{d}}
\newcommand{\cE}{\mathcal{E}}
\newcommand{\cH}{\mathcal{H}}
\newcommand{\cK}{\mathcal{K}}
\newcommand{\cV}{\mathcal{V}}
\newcommand{\cN}{\mathcal{N}}
\newcommand{\cP}{\mathcal{P}}
\newcommand{\fS}{\mathfrak{S}}
\newcommand{\fm}{\mathfrak{m}}
\newcommand{\fdot}{\,\cdot\,}
\newcommand{\wt}{\widetilde}
\newcommand{\bx}{\mathbf x}
\newcommand{\by}{\mathbf y}
\newcommand{\bO}{\mathbf{0}}
\newcommand{\bbeta}{\boldsymbol{\beta}}
\newcommand{\cC}{\mathcal{C}}
\newcommand{\bmu}{\boldsymbol{\mu}}
\newcommand{\rk}{\operatorname{rank}}
\newcommand{\ran}{\operatorname{Ran}}
\newcommand{\re}{\operatorname{Re}}
\newcommand{\tr}{\operatorname{tr}}
\newcommand{\cspn}{\overline{\operatorname{span}}}
\newcommand{\clos}{\operatorname{Clos}}
\newcommand{\f}{\varphi}
\newcommand{\fD}{\mathfrak{D}}
\newcommand{\ti}[1]{_{\scriptstyle \text{\rm #1}}}
\newcounter{vremennyj}
\numberwithin{equation}{section}
\newtheorem{thm}{Theorem}[section]
\newtheorem{lm}[thm]{Lemma}
\newtheorem{cor}[thm]{Corollary}
\newtheorem{prop}[thm]{Proposition}
\newtheorem*{prop*}{Proposition}
\theoremstyle{remark}
\newtheorem{rem}[thm]{Remark}
\newtheorem*{rem*}{Remark}
\begin{document}

\title[Preservation of a.c.~spectrum]{Preservation of absolutely continuous spectrum for contractive operators}

\author{Sergei Treil}
 \thanks{Work of S.~Treil is supported  in part by the National Science Foundation under the grant  
 	DMS-1856719.}
\address{Department of Mathematics \\ Brown University \\ Providence, RI 02912 \\ USA}
\email{treil@math.brown.edu}

\author{Constanze Liaw}
\address{Department of Mathematical Sciences\\ University of Delaware\\ Newark, DE 19716\\ USA\\and CASPER\\Baylor University\\
 Waco, TX  76798\\ USA}
\email{liaw@udel.edu}

\thanks{Work of C.~Liaw is supported  in part by the National Science Foundation under the grant  DMS-1802682. Since August 2020, C.~Liaw has been serving as a Program Director in the Division of Mathematical Sciences at the National Science Foundation (NSF), USA, and as a component of this position, she received support from NSF for research, which included work on this paper. Any opinions, findings, and conclusions or recommendations expressed in this material are those of the authors and do not necessarily reflect the views of the NSF}
\subjclass[2010]{Primary 47A55, 30H05, 47B32, 46E22. Secondary 30H10, 47B38}

\keywords{Trace class perturbations, contractive operators, dimension function, absolutely continuous spectrum}

\begin{abstract}
We consider contractive operators $T$ that are trace class perturbations of a unitary operator $U$. We prove that 
the dimension functions of the absolutely continuous spectrum of $T$, $T^*$ and of $U$ coincide. 
In particular, if $U$ has a purely singular spectrum then the characteristic function $\theta$ of $T$ is a \emph{two-sided inner} function, i.e.~$\theta(\xi)$ is unitary a.e.~on $\T$. 
Some corollaries of this result are related to investigations of the asymptotic stability of the operators $T$ and $T^*$ (convergence $T^n\to 0$ and $(T^*)^n\to 0$, respectively, in the strong operator topology). 

The proof is based on an explicit computation of the characteristic function.
\end{abstract}

\maketitle

\setcounter{tocdepth}{1}
\tableofcontents
\setcounter{tocdepth}{3}

\section*{Notation}

\begin{enumerate}
\item[$\D$\quad] The open unit disc in the complex plane $\C$, $\D:=\{z\in\C : |z|<1\}$. 
\item[$\T$\quad] The unit circle in $\C$, $\T=\partial\D$. 
\item[$\fm$\quad] The normalized ($\fm(\T)=1$) Lebesgue measure on $\T$. 
\item[ $\bI\ci\fD$, $\bI$\quad]   Identity operator; in most situations, where it is clear from the context we will skip the index, denoting the space where the operator acts. 
\item[$\fS_1$\quad] Trace class. 

\item[$\fS_2$\quad] The Hilbert--Schmidt  class.

\item[$H^2$\quad] The Hardy space $H^2$; we will also use the symbol $H^2(E)$ for the vector-valued $H^2$ functions with values in a Hilbert space $E$. 

\item[$z\to \xi\sphericalangle$\quad] $z\in\D$ approaches $\xi\in \T$ non-tangentially; the aperture of the non-tangential approach regions is assumed to be fixed (but not essential). 
\end{enumerate}

All Hilbert spaces in this paper are separable, and all operators act between Hilbert spaces (or on the same Hilbert space).  By a measure we always mean a finite Borel measure on $\T$. 

The term a.e.~always means a.e.~with respect to the Lebesgue measure on $\T$. For the a.e.~with respect to a different measure the term $\mu$-a.e.~is used. 

\section{Introduction and main results}
\label{s:intro}
Recall that a unitary operator $U$ on a separable Hilbert space is unitarily equivalent to  multiplication by the independent variable $\xi$ in the von Neumann direct integral of Hilbert spaces, 
\begin{align}
\label{e: direct integral 01}
\cN:= \int_\T \oplus E(\xi) \dd\mu(\xi).
\end{align}
The \emph{dimension function} $N(\xi)=N\ti U(\xi) := \dim E(\xi)$ is a unitary invariant of the operator $U$: together with the \emph{spectral type} $[\mu]$ of $\mu$, which is the class of all measures mutually absolutely continuous with $\mu$, they completely define the operator $U$ up to unitary equivalence. The function $N\ti U$ is often called the \emph{spectral multiplicity function}, and we will use this term. 

For the definiteness we assume that $N(\xi)=0$ whenever the the Lebesgue density $w=\dd \mu/\dd\fm$ of $\mu$ vanishes (note that $\mu(\{\xi\in\T:w(\xi)=0\})=0$).

The multiplicity of the absolutely continuous (a.c.) part of $U$, is by the definition, the function $N$ a.e.~with respect to the Lebesgue measure on $\T$.  

Let us now introduce the notion of spectral multiplicity (of the a.c.~spectrum) for a contraction. Recall that any contraction $T$ can be uniquely decomposed in the direct sum $T=V\oplus T_0$, where $T_0$ is a completely non-unitary (c.n.u.) contraction, and $V$ is unitary (either of these terms can be $0$). The spectral multiplicity of the a.c.~spectrum of $V$ is just the dimension function $N\ci{V}$ considered a.e.~with respect to  Lebesgue measure. 

As for the c.n.u.~part $T_0$, the rank of the defect functions $\Delta(\xi)$ and $\Delta_*(\xi)$, $\xi\in\T$ is often interpreted as the dimension functions for the a.c.~spectrum of a c.n.u.~contraction; in this paper we use this interpretation.

Let us recall the main definitions. Recall that a completely non-unitary contraction $T_0$  is uniquely defined (up to unitary equivalence) by its characteristic function $\theta=\theta\ci{T_0}$, cf.~\cite{SzNF2010}, which is an analytic operator-valued function on the unit disc $\D$, whose values are strict contractions $\theta(z): \fD\to \fD_*$; here $\fD$ and $\fD_*$ are some auxiliary Hilbert spaces. 

The characteristic function is defined up to constant unitary factors (possibly between different spaces) on both sides, so each such equivalence class corresponds to the collection of unitarily equivalent c.n.u.~contractions. We should also mention, that for a general contraction $T=V\oplus T_0$, its characteristic function coincides with the characteristic function of its purely contractive part $T_0$. 

Recall also that any bounded analytic function $F$ with values in $B(\fD;\fD_*)$ has non-tangential boundary values in the strong operator topology a.e.~on $\T$, and that $F(z)$, $z\in\D$ can be represented as the Poisson extensions of these boundary values. So for the characteristic function $\theta$ we denote its boundary values by $\theta(\xi)$, $\xi\in\T$, and we will treat $\theta$ as a function defined on $\D$ and a.e.~on $\T$. 

For a characteristic function $\theta$, its \emph{defect functions} are defined a.e.~on $\T$ as
\begin{align*}
\Delta:= (\bI - \theta^*\theta)^{1/2}, \qquad \Delta_*:= (\bI - \theta\theta^*)^{1/2}. 
\end{align*}
The functions $\rk \Delta(\xi)$ and $\rk \Delta_*(\xi)$, $\xi\in \T$ are often interpreted as the dimension functions for the a.c.~spectrum of a contraction, and we use this interpretation in the paper. 

\begin{thm}
\label{t: ac spectrum main}
Let $U$ be a unitary operator (on a separable Hilbert space), and let $K$ be a trace class operator such that $T=U+K$ is a contraction. If $T= V\oplus T_0$ is the decomposition of $T$ into unitary and completely non-unitary parts, and $\theta$ is the characteristic function of $T$, then 
\begin{align}\label{e:rank1}
\rk \Delta(\xi) &= \rk\Delta_*(\xi),  \\
\label{e:rank2}
N\ci U(\xi) &= N\ci{V}(\xi) + \rk \Delta(\xi)
\end{align}
a.e.~on $\T$. 
\end{thm}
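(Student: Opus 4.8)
The approach is to reduce the computation of the characteristic function $\theta$ of $T=U+K$ to an explicit formula in terms of the unitary $U$ and the trace-class perturbation $K$, and then read off the ranks of the defect functions $\Delta,\Delta_*$ directly from that formula. First I would write $K = T - U$ and factor it through its defect spaces: using the polar-type decomposition, write $K$ in the form $K = B_* \Lambda B$ (or, more conveniently, exhibit $I - T^*T$ and $I - TT^*$ as small-rank-in-an-integral-sense operators), where $B,B_*$ are operators onto the (separable, but generally infinite-dimensional) defect spaces $\fD,\fD_*$ of $T$, and $\Lambda$ is an operator built from $K$. Since $K\in\fS_1$, the operators $I-T^*T = -K^*U - U^*K - K^*K$ and $I-TT^* = -KU^* - UK^* - KK^*$ are also trace class, so $\fD$ and $\fD_*$ are the closures of the ranges of trace-class operators; this is the structural input that makes everything work.

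The heart of the matter is the explicit formula for $\theta$. Model theory (Sz.-Nagy–Foias) gives $\theta(z) = \bigl[-T + z D_{T^*}(I - zT^*)^{-1} D_T\bigr]\big|_{\fD}$ mapping into $\fD_*$, where $D_T = (I-T^*T)^{1/2}$, $D_{T^*} = (I-TT^*)^{1/2}$. Substituting $T = U+K$ and using that $U$ is unitary, I would massage this into an expression of the shape $\theta(z) = W + (\text{trace-class-valued analytic correction involving } (I-zU^*)^{-1})$, where $W$ is a fixed partial isometry (or unitary) coming from the unitary part of the relation between $\fD$ and $\fD_*$. Concretely one expects $\theta^*\theta - I$ and $\theta\theta^* - I$, restricted to $\T$, to be given by finite-trace-class-valued functions whose "singular" behavior is controlled entirely by the boundary behavior of the resolvent $(I-\xi U^*)^{-1}$, i.e.\ by the Poisson/Cauchy transform of the spectral measure of $U$. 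The key computation: on the boundary, $\bI - \theta(\xi)^*\theta(\xi)$ has rank equal to (a measurable selection of) $N\ci U(\xi) - N\ci V(\xi)$ a.e., because the c.n.u.\ part $T_0$ "sees" exactly the a.c.\ spectrum of $U$ that is not already accounted for by the unitary summand $V$. The symmetry $\rk\Delta = \rk\Delta_*$ a.e.\ will then follow from the fact that the correction term is symmetric in $U$ and $U^*$ up to the finite-rank factors $B,B_*$, which have equal (finite, or at least equal) ranks almost everywhere once we track the trace-class structure — equivalently, one uses that $\det_2(\theta(\xi)^*\theta(\xi))$ and $\det_2(\theta(\xi)\theta(\xi)^*)$ agree a.e.\ because $\theta^*\theta$ and $\theta\theta^*$ have the same nonzero spectrum.

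For \eqref{e:rank2} I would argue as follows. The defect spaces of $T$ are finite-dimensional along the "spectral directions" coming from $K$, so $\rk\Delta(\xi)$ is at most the multiplicity supplied by $K$; conversely, an a.c.\ spectral direction of $U$ either survives into the unitary part $V$ of $T$ (contributing to $N\ci V$) or gets "contracted" by $K$, in which case it shows up in $\rk\Delta$. Making this precise means computing $\bI - \theta(\xi)^*\theta(\xi)$ in the direct-integral model of $U$: writing $U$ as multiplication by $\xi$ on $\int_\T^\oplus E(\xi)\,\dd\mu(\xi)$ and $K$ as an integral operator with trace-class kernel, the defect $\Delta(\xi)^2$ becomes, a.e.\ with respect to Lebesgue measure, a specific sesquilinear form on $E(\xi)$ whose rank one checks to be $\dim E(\xi)$ minus the dimension of the piece on which $T$ acts unitarily. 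This fiberwise bookkeeping, combined with the invariance of the spectral multiplicity function under unitary equivalence and the fact that $V\oplus T_0$ is the Wold-type decomposition, yields \eqref{e:rank2}.

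**Main obstacle.** The hard part is the explicit computation of the characteristic function and, in particular, controlling the boundary values on $\T$: one must show that the formula for $\theta(z)$, valid a priori only in $\D$, has the expected rank structure in its non-tangential boundary limits, which requires knowing that the resolvent $(I - \xi U^*)^{-1}$ (or rather the relevant sandwiched trace-class quantity) has finite non-tangential boundary values a.e.\ — this is where the trace-class hypothesis on $K$ is essential, via Poltoratski-type results on boundary behavior of Cauchy transforms of singular measures, or via the theory of $\fS_1$-valued analytic functions. Equating the two defect ranks a.e.\ then reduces to a clean statement about the a.e.\ rank of $I - F^*F$ versus $I - FF^*$ for a bounded analytic $\fS_2$-perturbation $F$ of a constant partial isometry, which should follow from the local (fiberwise) analysis once the boundary-regularity step is in place.
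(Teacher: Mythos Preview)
Your overall strategy --- compute $\theta$ explicitly from the Sz.-Nagy--Foia\c{s} formula and read off the ranks of $\Delta,\Delta_*$ from the boundary values --- is exactly the paper's strategy. But the proposal has two genuine gaps that the paper fills with specific ideas you are missing.

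First, your argument for $\rk\Delta=\rk\Delta_*$ is wrong as stated. The fact that $\theta^*\theta$ and $\theta\theta^*$ have the same nonzero spectrum (or that $\det_2$ agrees) does \emph{not} give $\rk(\bI-\theta^*\theta)=\rk(\bI-\theta\theta^*)$ in infinite dimensions: if $\theta(\xi)$ is an isometry but not unitary (which is exactly the phenomenon you need to rule out), then $\theta^*\theta=\bI$ while $\bI-\theta\theta^*$ has positive rank, yet the nonzero spectra and regularized determinants coincide. The paper avoids this by first using a polar-decomposition lemma to reduce to the special form $T=U_1+\bB(\Gamma-\bI)\bB^*U_1$ with $\Gamma=\Gamma^*\ge 0$ a strict contraction and $\bI-\Gamma\in\fS_1$; this self-adjointness of $\Gamma$ forces $\fD_T$ and $\fD_{T^*}$ to be the \emph{same} space $\fD$, and the characteristic function then takes the Cayley-transform shape
\[
\theta=\dfrac{\cC_2\wt\bmu-\bI}{\cC_2\wt\bmu+\bI},\qquad (\bI-\theta)^{-1}=\cC\wt\bmu,
\]
for a trace-class-valued measure $\wt\bmu=W\,d\mu$. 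A direct computation of $\re\cC_2\wt\bmu$ (the Poisson extension) then yields the exact formulas
\[
\Delta(\xi)^2=(\bI-\theta(\xi)^*)\,W(\xi)w(\xi)\,(\bI-\theta(\xi)),\qquad
\Delta_*(\xi)^2=(\bI-\theta(\xi))\,W(\xi)w(\xi)\,(\bI-\theta(\xi)^*),
\]
a.e.~on $\T$, and since $\bI-\theta(\xi)$ is shown to be \emph{invertible} a.e.~(via the $\fS_2$-valued boundary values of $\cC\wt\bmu$), both ranks equal $\rk W(\xi)$. This simultaneously proves \eqref{e:rank1} and the star-cyclic case of \eqref{e:rank2}, because $\rk W(\xi)=N_{U_1}(\xi)$ a.e.; your heuristic ``an a.c.\ direction either survives into $V$ or gets contracted'' is replaced by this concrete identity.

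Second, you never mention the step that passes from $U_1$ back to $U$. The reduction lemma only gives $U-U_1\in\fS_1$, so to conclude $N_U(\xi)=N_{U_1}(\xi)$ a.e.\ in \eqref{e:rank2} the paper invokes the Birman--Kre\u{\i}n theorem (the unitary analogue of Kato--Rosenblum). Without that ingredient the general case does not follow from the star-cyclic one.
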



We should mention that there is a large body of work studying the absolutely continuous spectrum in the case when the perturbed operator is not unitary/self-adjoint, see for example \cite{Naboko77, Naboko80, Naboko87, Tikho, Solomyak_B_scattering_1989}. However, these papers were mostly concerned with the existence of the wave operators, and we are not sure if it is possible to easily get our result from there. In particular our result covers the case when the spectrum of the perturbed operator is the whole closed unit disc, and a typical assumption in results about wave operators is the ``thinness" of the spectrum. 

Even if we assume that the spectrum is not the whole unit disc (for example if the unitary operator has purely singular spectrum \cite{Nik69}), a rigorous translation from one language to the other would be not much simpler than our self-contained presentation; and we would need to use some highly non-trivial the results from very technical papers.   

Corollaries \ref{c:1}, \ref{c:2}, \ref{c:3} below concern the asymptotic stability of the perturbed operator; 
some of these results might be known to experts. Let, like in Theorem \ref{t: ac spectrum main},  $T=U+K$ be a contraction, $U$  be unitary and $K\in\fS_1$. Let also $T=V\oplus T_0$ be the decomposition of $T$ into unitary and completely non-unitary parts, and let $\theta $ be the characteristic function of $T$. 

\begin{cor}\label{c:1}
If $U$ has purely singular spectrum (i.e.~if $\mu$ is purely singular), then $\theta$ is a double inner function, meaning that $\theta(\xi)$ is a unitary operator a.e.~on $\T$. 
\end{cor}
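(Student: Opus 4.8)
The plan is to deduce this immediately from Theorem~\ref{t: ac spectrum main}. The key preliminary observation is that a purely singular measure $\mu$ has Lebesgue density $w=\dd\mu/\dd\fm$ equal to $0$ a.e.~on $\T$; by the normalization convention adopted in the introduction (we set $N(\xi)=0$ wherever $w(\xi)=0$), this means $N\ci U(\xi)=0$ for a.e.~$\xi\in\T$. In other words, the assumption on $\mu$ says exactly that the multiplicity function of the a.c.~spectrum of $U$ vanishes a.e.

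Then I would plug this into the identities of the theorem. From \eqref{e:rank2} we get $N\ci{V}(\xi)+\rk\Delta(\xi)=N\ci U(\xi)=0$ a.e.; since both terms on the left are nonnegative (they are dimensions), this forces $\rk\Delta(\xi)=0$ for a.e.~$\xi$ (and incidentally $N\ci{V}(\xi)=0$ a.e., i.e.~the unitary part $V$ carries no a.c.~spectrum either). Feeding this back into \eqref{e:rank1} gives $\rk\Delta_*(\xi)=\rk\Delta(\xi)=0$ a.e.~as well; intersecting these finitely many a.e.-statements still leaves an a.e.-statement.

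Finally, since $\Delta(\xi)=(\bI-\theta^*(\xi)\theta(\xi))^{1/2}\ge 0$ and $\Delta_*(\xi)=(\bI-\theta(\xi)\theta^*(\xi))^{1/2}\ge 0$, having rank zero is the same as being the zero operator, so $\theta^*(\xi)\theta(\xi)=\bI\ci{\fD}$ and $\theta(\xi)\theta^*(\xi)=\bI\ci{\fD_*}$ for a.e.~$\xi\in\T$. A bounded operator that is simultaneously an isometry and a co-isometry is unitary (and this automatically forces $\dim\fD=\dim\fD_*$), so $\theta(\xi)$ is unitary a.e.~on $\T$, which is precisely the assertion that $\theta$ is a double inner function. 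Since the corollary is a direct specialization of Theorem~\ref{t: ac spectrum main}, there is no genuine obstacle here; the only things to watch are the routine bookkeeping with the a.e.-equalities and the elementary fact that a nonnegative operator of rank zero vanishes identically.
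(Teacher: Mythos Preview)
Your proposal is correct and follows essentially the same argument as the paper: both deduce from $N\ci U=0$ a.e.\ via \eqref{e:rank2} that $\rk\Delta=0$ a.e., then use \eqref{e:rank1} to get $\rk\Delta_*=0$ a.e., and conclude that $\theta(\xi)$ is unitary a.e.\ from the definition of the defect functions. Your version is slightly more detailed (you spell out why $\rk\Delta=0$ forces $\Delta=0$ and note the incidental consequence $N\ci V=0$), but the route is identical.
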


\begin{cor}\label{c:2}
If $U$ has purely singular spectrum, then $T_0$ and $T_0^*$ are asymptotically stable, meaning that $T_0^n\to 0$ and $(T_0^*)^n\to 0$ in the strong operator topology as $n\to \infty$. 
\end{cor}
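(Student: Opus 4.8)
The plan is to deduce Corollary \ref{c:2} directly from Corollary \ref{c:1} using the Sz.-Nagy--Foia\c{s} model theory, which I will take for granted. Recall that for a c.n.u.\ contraction $T_0$ with characteristic function $\theta$, the condition $T_0^n\to 0$ strongly (and dually $(T_0^*)^n\to 0$) is encoded in the boundary behavior of $\theta$: more precisely, $T_0$ belongs to the class $C_{\fdot 0}$ (i.e.\ $(T_0^*)^n\to 0$) iff $\theta$ is inner (so $\theta(\xi)$ is an isometry a.e.\ on $\T$), and dually $T_0$ belongs to $C_{0\fdot}$ (i.e.\ $T_0^n\to 0$) iff $\theta$ is $*$-inner (so $\theta(\xi)$ is a coisometry a.e.\ on $\T$). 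Hence $T_0\in C_{00}$, meaning both $T_0^n\to 0$ and $(T_0^*)^n\to 0$ strongly, exactly when $\theta$ is a two-sided (double) inner function.

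So the heart of the argument is just: by Corollary \ref{c:1}, the hypothesis that $U$ has purely singular spectrum forces $\theta$ to be double inner, and therefore $T_0\in C_{00}$, which is precisely the assertion $T_0^n\to 0$ and $(T_0^*)^n\to 0$ strongly. First I would state the relevant model-theoretic characterization with a reference to \cite{SzNF2010} (the $C_{\fdot 0}$/$C_{0\fdot}$ criteria in terms of $\theta$ being inner, resp.\ $*$-inner). Then I would invoke Corollary \ref{c:1} to get that $\theta(\xi)$ is unitary a.e., which simultaneously makes $\theta$ inner and $*$-inner, and conclude. Alternatively, without quoting the $C_{00}$ criterion, one can argue via \eqref{e:rank1}--\eqref{e:rank2}: since $\mu$ is purely singular, $N\ci U(\xi)=0$ a.e., so $N\ci V(\xi)=0$ a.e.\ (forcing $V$ to have purely singular spectrum, but actually the point is the defects) and $\rk\Delta(\xi)=\rk\Delta_*(\xi)=0$ a.e., i.e.\ $\Delta=\Delta_*=0$ a.e.\ on $\T$; this is literally the statement that $\theta$ is two-sided inner, and then the model-theoretic dictionary gives asymptotic stability.

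I do not anticipate a genuine obstacle here — the corollary is essentially a translation of Corollary \ref{c:1} through standard model theory — but the one point requiring care is making sure the unitary summand $V$ does not spoil the conclusion. The statement of Corollary \ref{c:2} only claims asymptotic stability of $T_0$ and $T_0^*$ (not of $T$ itself), so the unitary part $V$ is simply excluded from the assertion, which is consistent with the fact that a nontrivial unitary operator is never asymptotically stable. Thus the only thing to verify is that the characteristic function of $T$, which by the discussion in the excerpt coincides with that of its c.n.u.\ part $T_0$, being double inner is both necessary and sufficient for $T_0\in C_{00}$; this is exactly \cite[Ch.~II--III]{SzNF2010}. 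One should also note in passing that this forces $T_0$ (equivalently $\theta$) to have no point spectrum on $\T$ in the relevant sense, but that is not needed for the statement.
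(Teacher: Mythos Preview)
Your proposal is correct and follows essentially the same route as the paper: the paper's proof simply invokes Corollary~\ref{c:1} together with the Sz.-Nagy--Foia\c{s} criterion (stated there as Proposition~\ref{p:SzNF}) that a c.n.u.\ contraction $T_0$ is asymptotically stable iff $\theta$ is $*$-inner, and $T_0^*$ is asymptotically stable iff $\theta$ is inner. Your remarks about the unitary summand $V$ and the alternative derivation via \eqref{e:rank1}--\eqref{e:rank2} are accurate but not needed for the argument.
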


\begin{cor}\label{c:3}
If $T$ is asymptotically stable, i.e.~if $T^n\to 0$ in the strong operator topology as $n\to \infty$, then $U$ has purely singular spectrum.  
\end{cor}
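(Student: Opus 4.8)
\textbf{Proof proposal for Corollary \ref{c:3}.}

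The plan is to contrapose: assuming that $U$ does \emph{not} have purely singular spectrum, I will show that $T$ is not asymptotically stable. So suppose $\mu$ has a nontrivial absolutely continuous part, i.e.~the Lebesgue density $w=\dd\mu/\dd\fm$ is positive on a set of positive Lebesgue measure. Equivalently, the a.c.~multiplicity function $N\ci U(\xi)$ is strictly positive on a set $E\subset\T$ with $\fm(E)>0$. By Theorem \ref{t: ac spectrum main}, equation \eqref{e:rank2}, we have $N\ci U(\xi)=N\ci V(\xi)+\rk\Delta(\xi)$ a.e., so either $N\ci V>0$ on a set of positive measure, or $\rk\Delta>0$ on a set of positive measure (or both). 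In the first case the unitary summand $V$ in $T=V\oplus T_0$ has a nontrivial a.c.~part, hence a summand unitarily equivalent to multiplication by $\xi$ on a nonzero $L^2$-space; on any such piece $\|V^n f\|=\|f\|$ does not go to $0$, so $T^n\not\to 0$ in the strong operator topology and we are done.

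In the second case $\rk\Delta(\xi)>0$ on a set of positive Lebesgue measure, so the characteristic function $\theta=\theta\ci{T_0}$ of $T_0$ is \emph{not} inner: $\theta(\xi)$ fails to be an isometry on a set of positive measure. The key classical fact I would invoke here is the criterion from Sz.-Nagy--Foia\c{s} \cite{SzNF2010}: a c.n.u.~contraction $T_0$ satisfies $T_0^n\to 0$ strongly (i.e.~$T_0$ is of class $C_{0\cdot}$) if and only if its characteristic function $\theta$ is \emph{inner}, i.e.~$\theta(\xi)$ is an isometry a.e.~on $\T$ (equivalently $\Delta(\xi)=0$ a.e.). Since here $\Delta$ does not vanish a.e., $T_0$ is not of class $C_{0\cdot}$, so $T_0^n\not\to 0$ strongly, and since $T=V\oplus T_0$ this gives $T^n\not\to 0$ strongly. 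In both cases the contrapositive is established, proving the corollary.

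The only real subtlety is the precise statement and applicability of the $C_{0\cdot}$ criterion: one must make sure that ``$\theta$ inner'' is being used in the correct (left/right) sense matching the convention of \cite{SzNF2010} for the characteristic function of $T_0$ (as opposed to $T_0^*$), so that it is indeed $\Delta=(\bI-\theta^*\theta)^{1/2}$ that controls $T_0^n\to 0$ and $\Delta_*$ that controls $(T_0^*)^n\to 0$. Given the identity \eqref{e:rank1}, $\rk\Delta=\rk\Delta_*$, this distinction is in fact immaterial for the present argument: whichever of $\Delta,\Delta_*$ one needs, it is nonvanishing on a set of positive measure precisely when the other is, so the conclusion $T^n\not\to0$ follows regardless. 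Thus the main obstacle is merely bookkeeping of conventions rather than any genuine mathematical difficulty; the substance is entirely carried by Theorem \ref{t: ac spectrum main} together with the standard characterization of the class $C_{0\cdot}$ via inner characteristic functions.
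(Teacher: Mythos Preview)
Your argument is correct and is essentially the contrapositive of the paper's direct proof, using the same three ingredients: the trivial fact that a nonzero unitary summand obstructs asymptotic stability, Theorem \ref{t: ac spectrum main} (both identities), and the Sz.-Nagy--Foia\c{s} criterion relating $C_{0\cdot}$ to the inner property of $\theta$. The only wrinkle is that in the paper's convention (Proposition \ref{p:SzNF}) asymptotic stability of $T_0$ corresponds to $\theta$ being $*$-inner (i.e.~$\Delta_*=0$ a.e.), not inner; you correctly anticipate this and neutralize it with \eqref{e:rank1}, so nothing is lost.
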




Note that in the corollaries above the spectrum of $T$ does not fill the unit disc (see \cite{Nik69} for Corollaries \ref{c:1}, \ref{c:2}, and \cite{TU83} for Corollary \ref{c:3}), so the results about wave operators can be used to prove the corollaries. 
However, as we mentioned above, such a proof relies on some highly technical  non-trivial papers. Moreover, we expect that  presented with all  the details it will not be significantly shorter than our self-contained paper. 

\begin{rem}
We should mention that under the assumptions of any of the above corollaries the operator $T_0$ belongs to the class of so-called $C_0$-contraction, meaning that there exists a function $\f\in H^\infty$ such that $\f(T_0)=0$. The theory for this operator class is well-developed, but not directly relevant for our paper, so, we will omit further discussion.
\end{rem}

Our proof of the main result (Theorem \ref{t: ac spectrum main}) is slightly lengthy but mostly elementary: after some simple operator-theoretic reasoning, we reduce everything to a particular case, see Lemma \ref{l: reduction 01} below.  We then express the characteristic function $\theta$ in terms of Cauchy--Herglotz transform of some $\fS_1$-valued measure, see Section \ref{s:char}. The proof of the theorem is then obtained by analyzing the boundary values of $\theta$, which is pretty straightforward, see Section \ref{s:proof main}.

We prove Corollaries \ref{c:1} through \ref{c:3} in Section \ref{ss:corollaries}.

\section{Some reductions}

Recall that an operator $T$ is called a \emph{strict contraction} if $\|Tx\|<\|x\|$ for all $x\ne 0$; clearly in this case $\|T\|\le 1$.

\begin{lm}
\label{l: reduction 01}
Let $T=U+K$, where $U$ is unitary, $K\in\fS_1$ and $\|T\|\le 1$ (all operators act on a Hilbert space $\cH$). Then $T$ can be represented as 
\begin{align}
\label{e: perturbation 01}
T = U_1 + \bB (\Gamma-\bI) \bB^* U_1, 
\end{align}
where $U_1$ is unitary, $U-U_1\in \fS_1$, $\bB:\fD\to\cH$ is an isometry from and auxiliary Hilbert space $\fD$, and $\Gamma=\Gamma^*\ge\bO$ is a strict contraction such that $\bI-\Gamma\in\fS_1$. 
\end{lm}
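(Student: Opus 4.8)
The plan is to start from the trace-class perturbation $T = U + K$ and first isolate the non-unitary part of $T$. Since $\|T\|\le 1$, the self-adjoint operator $\bI - T^*T \ge \bO$ is trace class (because $K \in \fS_1$ and $U$ is unitary: $\bI - T^*T = -U^*K - K^*U - K^*K \in \fS_1$), so $\Delta_T := (\bI - T^*T)^{1/2} \in \fS_2$. Let $\fD := \overline{\ran \Delta_T}$ be the defect space and $\bB: \fD \to \cH$ the inclusion isometry (we may pad $\fD$ with an infinite-dimensional summand if one wants $\fD$ fixed, but this is cosmetic). The idea is to write $T = U_1 + \bB(\Gamma - \bI)\bB^* U_1$ for an appropriate unitary $U_1$ and strict contraction $\Gamma \ge \bO$, which means solving $T = (\bI - \bB(\bI-\Gamma)\bB^*)U_1$, i.e. $T = (\bB(\Gamma-\bI)\bB^* + \bI)U_1$.

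The key observation is that the operator $P := \bI - \bB(\bI-\Gamma)\bB^*$ acts as the identity off $\fD$ and as $\Gamma$ on $\fD$; so if I set $A := (T^*T)^{1/2} = (\bI - \Delta_T^2)^{1/2}$, the polar-type decomposition suggests taking $P^2 = T^*T$ on a suitable space. More precisely, I would define $\Gamma$ on $\fD$ via $\Gamma := \bB^* (T^*T)^{1/2} \bB = \bI_\fD - \bB^*\Delta_T^2 \bB$ extended appropriately; one checks $\Gamma = \Gamma^* \ge \bO$, $\bI_\fD - \Gamma = \bB^*\Delta_T^2\bB \in \fS_1$, and $\Gamma$ is a strict contraction because $T$ restricted to $\fD$ is (this uses that the unitary part of $T$ lives in $\ker\Delta_T = \fD^\perp$, so $\Delta_T x = 0$ with $x \in \fD$ forces $x = 0$). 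Then $P := \bI - \bB(\bI - \Gamma)\bB^* = \bI - \bB\bB^*\Delta_T^2\bB\bB^*$, but a cleaner choice is $P := \bI - \bB\bB^* + \bB\Gamma\bB^*$, which satisfies $P \ge \bO$, $P^2 = (T^*T)^{1/2}$... actually I would instead directly set $P := (\bI - \Delta_T^2)^{1/2}$ composed with the identity-off-$\fD$ structure — note $\Delta_T^2 = \bB\bB^*\Delta_T^2\bB\bB^*$ since $\ran\Delta_T \subseteq \fD$, so $(\bI - \Delta_T^2)^{1/2} = \bI - \bB\bB^* + \bB(\bI_\fD - \bB^*\Delta_T^2\bB)^{1/2}\bB^*$; thus taking $\Gamma := (\bI_\fD - \bB^*\Delta_T^2\bB)^{1/2}$ gives $\bI - \bB(\bI-\Gamma)\bB^* = (\bI - \Delta_T^2)^{1/2} = |T|$. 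Finally define $U_1 := |T|^{-1} T$ where the inverse is taken appropriately; since $|T| = (T^*T)^{1/2}$ is boundedly invertible on $\overline{\ran |T|}$... here one must be careful because $|T|$ need not be invertible.

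To handle invertibility I would argue as follows. The range of $|T|$ is dense when $T$ is a strict contraction on the relevant part, but in general $\ker T = \ker |T|$ may be nonzero; however $\ker T \subseteq \fD$ (if $Tx = 0$ then $x$ is in the c.n.u. part and $\|\Delta_T x\| = \|x\|$, so $x \in \fD$), and one can check $\ker T$ is then a reducing-type subspace on which one adjusts $U_1$ by hand: on $\ker T$ set $U_1$ to be an arbitrary unitary (e.g. borrowed from a fixed unitary on an auxiliary space, or better, observe $\dim \ker T = \dim \ker T^*$ since $\bI - T^*T, \bI - TT^*$ differ by a trace class operator with equal... no — instead use that $\ker T$ and $\ker T^*$ have equal dimension because $\operatorname{index}$ considerations: $T$ is a trace-class perturbation of the invertible $U$, hence Fredholm of index zero). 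Then $U_1$ defined as the partial isometry in the polar decomposition of $T$, extended to a unitary using index zero, satisfies $U - U_1 \in \fS_1$ because $T - U_1 = (|T| - \bI)U_1 + (\text{correction on } \ker T) = \bB(\Gamma - \bI)\bB^* U_1 + \fS_1$, and $\bB(\Gamma-\bI)\bB^* \in \fS_1$ while $T - U \in \fS_1$ by hypothesis, so $U_1 - U \in \fS_1$.

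The main obstacle is the bookkeeping around $\ker T$ (equivalently, the non-invertibility of $|T|$) and verifying that the polar-decomposition unitary $U_1$ really is a trace-class perturbation of $U$: one needs the Fredholm index argument to extend the partial isometry to a genuine unitary, and one needs to confirm that the finite-rank adjustment on $\ker T$ does not spoil $U - U_1 \in \fS_1$. Everything else — that $\Gamma := (\bI_\fD - \bB^*\Delta_T^2 \bB)^{1/2}$ is self-adjoint, positive, a strict contraction, with $\bI - \Gamma \in \fS_1$, and that $\bI - \bB(\bI-\Gamma)\bB^* = |T|$ — is a direct computation using $\ran \Delta_T \subseteq \fD$ and the functional calculus. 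Once $U_1$ and $\Gamma$ are in hand, \eqref{e: perturbation 01} follows by writing $T = |T| U_1 = \bigl(\bI - \bB(\bI-\Gamma)\bB^*\bigr)U_1 = U_1 + \bB(\Gamma - \bI)\bB^* U_1$.
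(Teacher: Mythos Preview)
Your overall strategy---polar decomposition plus a Fredholm index-zero argument to extend the partial isometry to a unitary---is exactly the route the paper takes. However, there is a genuine error in your choice of defect operator that makes the argument break down as written.

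You correctly observe that \eqref{e: perturbation 01} asks for a factorization $T = P\,U_1$ with $P := \bI - \bB(\bI-\Gamma)\bB^* \ge \bO$ self-adjoint and $U_1$ unitary. But then $TT^* = P\,U_1 U_1^* P = P^2$, so necessarily $P = (TT^*)^{1/2} = |T^*|$, \emph{not} $|T| = (T^*T)^{1/2}$. Your construction, based on $\Delta_T = (\bI - T^*T)^{1/2}$, produces $P = |T|$ and then sets $U_1 := |T|^{-1}T$; but $|T|^{-1}T$ is unitary only when $T$ is normal (check any $2\times 2$ non-normal invertible example). So the final identity $T = |T|\,U_1$ simply fails in general.

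The fix is immediate: replace $\Delta_T$ by $\Delta_{T^*} = (\bI - TT^*)^{1/2}$, take $\fD := \overline{\ran\Delta_{T^*}}$, and set $\Gamma := (\bI_\fD - \bB^*\Delta_{T^*}^2\bB)^{1/2}$. Your functional-calculus computation then gives $\bI - \bB(\bI-\Gamma)\bB^* = |T^*|$, and the (left) polar decomposition $T = |T^*|\,V$, with $V$ the partial isometry extended to a unitary via the index-zero argument you outlined, yields \eqref{e: perturbation 01}. The verification that $\Gamma$ is a strict contraction, that $\bI-\Gamma\in\fS_1$, and that $U-U_1\in\fS_1$ goes through exactly as you sketched. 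The paper avoids this left/right bookkeeping by first proving the ``dual'' formula $T = U_1 + U_1\bB(\Gamma-\bI)\bB^* = U_1|T|$ (the standard polar decomposition) for $T^* = U^* + K^*$, and then taking adjoints to recover \eqref{e: perturbation 01}.
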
 

In the proof of the above Lemma \ref{l: reduction 01} we will use the following trivial fact. 

\begin{lm}
\label{l: orthog 01} 
Let $\|T\|\le 1$, and let $\|Tx\|=\|x\|\ne0$. Then for any $y\perp x$ we have that $Ty\perp Tx$. 
\end{lm}

We leave the proof of this lemma as an exercise for the reader. 

We will need one more simple Lemma.

\begin{lm}
\label{l: polar decomposition I+S_1}
Let $R = \bI + K$, where $K\in\fS_1$.  Then one can write a polar decomposition   $R= V|R|$ where $|R|:=(R^*R)^{1/2}$ and $V$ is unitary, such that   $|R|-\bI \in\fS_1$ and $V-\bI \in\fS_1$. 
\end{lm}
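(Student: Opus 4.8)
The plan is to compute $R^*R$ directly, peel off a trace-class summand, take the operator square root, and then repair the canonical polar decomposition so that its partially isometric factor becomes unitary.

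First I would expand $R^*R = \bI + K + K^* + K^*K$. Since $K\in\fS_1\subset\fS_2$, the term $K^*K$ is a product of two Hilbert--Schmidt operators and hence lies in $\fS_1$; using also that $\fS_1$ is a two-sided ideal we get $R^*R = \bI + L$ with $L=L^*\in\fS_1$ and $R^*R\ge\bO$. To see that $|R|-\bI = (\bI+L)^{1/2}-\bI\in\fS_1$, I would use the identity
\begin{align*}
(\bI+L)^{1/2}-\bI = L\bigl[(\bI+L)^{1/2}+\bI\bigr]^{-1},
\end{align*}
which is legitimate because $(\bI+L)^{1/2}\ge\bO$ makes $(\bI+L)^{1/2}+\bI$ boundedly invertible; thus $|R|-\bI$ is the product of $L\in\fS_1$ with a bounded operator, so it lies in $\fS_1$.

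Next I would upgrade the canonical polar decomposition $R = V_0|R|$, where $V_0$ is the partial isometry with initial space $(\ker R)^\perp=\overline{\ran|R|}$ and final space $\overline{\ran R}$, to one with a unitary factor. Because $K$ is compact, $R=\bI+K$ is Fredholm of index zero (Fredholm alternative), so $\dim\ker R = \dim(\overline{\ran R})^\perp<\infty$; choosing any unitary from $\ker R$ onto $(\overline{\ran R})^\perp$ and extending it by $0$ on $(\ker R)^\perp$ produces a finite-rank operator $W$ for which $V:=V_0+W$ is unitary. Since $W$ vanishes on $(\ker R)^\perp\supseteq\ran|R|$, we still have $V|R| = V_0|R| = R$, and as $|R|\ge\bO$ this is a genuine polar decomposition. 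Finally, rewriting $V|R| = \bI+K$ as $V+V(|R|-\bI) = \bI+K$ gives $V-\bI = K - V(|R|-\bI)$, which lies in $\fS_1$ by the previous paragraph and the ideal property.

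The only genuine subtlety is the possible non-invertibility of $R$ (for instance $K=-P$ with $P$ a rank-one orthogonal projection), which is precisely what the Fredholm-index-zero observation takes care of, allowing the finite-rank correction $W$. The square-root computation is the other technical ingredient, but it is routine via the resolvent-type identity above.
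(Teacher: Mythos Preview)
Your argument is correct. Both your proof and the paper's rest on the same two ingredients --- that $|R|-\bI\in\fS_1$ and that the Fredholm index-zero property lets one complete the partial isometry $V_0$ to a unitary by a finite-rank correction --- but the organization differs. The paper first treats the invertible case (where $V=R|R|^{-1}$ and one argues $|R|^{-1}\in\bI+\fS_1$) and then reduces the general case to it by perturbing $R$ to an invertible $\wt R=R+R_1$ with $R_1$ finite rank; your route is more direct, never splitting into cases and never needing $|R|^{-1}$. In particular, your resolvent-type identity $(\bI+L)^{1/2}-\bI = L[(\bI+L)^{1/2}+\bI]^{-1}$ makes explicit a step the paper calls ``trivial'', and your algebraic observation $V-\bI = K - V(|R|-\bI)$ bypasses the paper's detour through $\wt R|\wt R|^{-1}$. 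The paper's approach has the minor bonus of showing that \emph{every} unitary completion of $V_0$ satisfies $V-\bI\in\fS_1$ (since they all differ by finite rank), though your argument gives this too once one notes that any two such $V$ differ only on $\ker R$.
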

\begin{rem*}
The term $|R|$  in the polar decomposition is uniquely defined. The unitary operator $V$ is 
uniquely defined if and only if $\ker R=\{0\}$ (since $R$ is a Fredholm operator of index $0$, it 
happens if and only if $\ran R$ is the whole space). Formally the above Lemma \ref{l: polar 
decomposition I+S_1} means that for some choice of the unitary operator $V$ we have $\bI -V\in 
\fS_1$; while it is not essential for the proof, one can see from the proof, that in fact, 
\emph{all} possible choices of $V$ satisfy $\bI-V\in\fS_1$. 
\end{rem*}

\begin{proof}[Proof of Lemma \ref{l: polar decomposition I+S_1}]
Let us first consider the case when $R$ is invertible (which happens if and only if $\ker R=\{0\}$). In this case, $|R|$ is trivially invertible and $V$ is unique and is defined as $V=R|R|^{-1}$. 

We know that $R\in\bI + \fS_1$, so trivially $|R|^2 = R^*R\in \bI + \fS_1$, and so $|R|\in\bI+\fS_1$. Since $|R|$ is invertible, it is easy to see that $|R|^{-1}\in \bI+\fS_1$, and therefore $V= R|R|^{-1}\in \bI + \fS_1$. 

Now, let us consider the general case. Since for a compact $K$ the operator  $\bI+K$ is  Fredholm of index $0$, the range of $R$ is closed, and $\dim \ker R = \dim \ker R^* <\infty$ (and $\ran R =(\ker R^*)^\perp$). Take any invertible operator $R_1:\ker R \to \ker R^*$ (such an operator exists and has finite rank, because $\dim \ker R = \dim \ker R^* <\infty$). Define $\wt R:= R+R_1$. By the construction, $\wt R$ is invertible, and maps $(\ker R)^\perp $ onto $\ran R=(\ker R^*)^\perp$ and $\ker R$ onto $\ker R^*$. 

Note also that $\wt R -\bI\in \fS_1$. 

If we denote by $R_0$ the restriction of $R$ onto $(\ker R)^\perp$ (with target space restricted to 
$\ran R = (\ker R^*)^\perp$), we can see that $|\wt R|$ in the decomposition $ (\ker R)^\perp\oplus 
\ker R$ has the block diagonal form 
\begin{align}
\label{e: block wt R}
|\wt R| = \left( 
\begin{array}{cc}
|R_0| & 0 \\
0 & |R_1|
\end{array}  \right) . 
\end{align}
Consider the polar decomposition $\wt R = V |\wt R|$; since $\wt R$ is invertible, $V$ is uniquely 
defined by $V=\wt R |\wt R|^{-1}$. As we discussed above in the beginning of the proof, since $\wt 
R$ is invertible, we have that $V\in \bI +\fS_1$. We also know that $|\wt R| \in \bI+\fS_1$, and so 
$|R|\in\bI+\fS_1$, because $|R|$ differs from $|\wt R|$ by a finite rank block $|R_1|$, see 
\eqref{e: block wt R} above. 

The fact that $\wt R$ maps $(\ker R)^\perp $ onto $\ran R=(\ker R^*)^\perp$ and $\ker R$ onto $\ker R^*$ and the block diagonal structure \eqref{e: block wt R} imply that $V$ also maps $(\ker R)^\perp $ onto $\ran R=(\ker R^*)^\perp$ and $\ker R$ onto $\ker R^*$. Therefore $R=V|R|$, so we have constructed the desired polar decomposition. 
\end{proof}

\begin{proof}[Proof of Lemma \ref{l: reduction 01}]
We will prove a ``dual'' formula to \eqref{e: perturbation 01}, namely the formula 
\begin{align}
\label{e: perturbation 02}
T = U_1 + U_1 \bB (\Gamma-\bI) \bB^*;
\end{align}
applying this formula to the adjoint $T^* = U^* + K^*$ and then taking the adjoint we will get \eqref{e: perturbation 01}. 

The identity $U+K = U(\bI + U^*K)$ means that it is sufficient to prove \eqref{e: perturbation 02} for the particular case $U=\bI$.  

So, let $T=\bI+K$, $\|T\|\le 1$ and $K\in\fS_1$. 
Denote $\fD_1:= (\ker K)^\perp$.  Clearly
\begin{align*}
(\bI + K) x & = x \qquad \forall x\in \fD_1^\perp,   
\end{align*}
and therefore by Lemma \ref{l: orthog 01}
\begin{align*}
(\bI + K)\fD_1 & \subset \fD_1. 
\end{align*}
Then $K\fD_1\subset \fD_1$, and we can treat $K$ as an operator on $\fD_1$. 

So, let us restrict our attention to $\fD_1$. Denote $R=(\bI+K)\Bigm|_{\fD_1}$.

By Lemma \ref{l: polar decomposition I+S_1} we can write a polar decomposition  $R= V|R|$ of $R$, with unitary $V$  such that 
\begin{align*}
|R|-\bI\ci{\fD_1} \in \fS_1, \qquad V-\bI\ci{\fD_1} \in \fS_1 . 
\end{align*}
Denote $\fD_2:= \fD_1\ominus \ker (|R| - \bI\ci{\fD_1})$. Then trivially, $\Gamma= |R|\Bigm|_{\fD_2}$ is a strict contraction on $\fD_2$, $\Gamma=\Gamma^*$, and $\Gamma-\bI\ci{\fD_2}\in \fS_1$.  

Now gathering everything together we see that 
\begin{align*}
\bI\ci\cH + K = U_1 + U_1 P\ci{\fD_2} (\Gamma - \bI\ci{\fD_2}) P\ci{\fD_2}
\end{align*}
where 
\begin{align*}
U_1x = 
\begin{cases}
V x  & x\in \fD_1 \\
x  & x\in \cH \setminus \fD_1 . 
\end{cases}
\end{align*}
Note, that $\fD_1$ is a reducing subspace for $U_1$, and so the condition $V-\bI\ci{\fD_1}\in \fS_1$ implies that $U_1-\bI\ci\cH \in\fS_1$. Thus we have proved the formula \eqref{e: perturbation 02} for the case $U=\bI$ (with $\fD=\fD_2$ and $\bB$ being the embedding of $\fD_2$ into $\cH$). 

If $\fD$ is an abstract space, $\dim\fD=\dim\fD_2$, then taking an isometry $\bB: \fD\to \cH$, $\ran \bB= \fD_2$,  we can rewrite the above identity as 
\begin{align*}
\bI\ci\cH + K = U_1 + U_1 \bB (\bB^*\Gamma \bB - \bI\ci{\fD}) \bB^*
\end{align*}
so the general ``abstract'' form of \eqref{e: perturbation 02} is proved for $U=I$. 

As we discussed in the beginning of the proof, this proves \eqref{e: perturbation 02} for the general case, and so the formula \eqref{e: perturbation 01}. Lemma \ref{l: reduction 01} is proved. 
\end{proof}

\section{Characteristic functions}\label{s:char}

\subsection{Operator-valued spectral measures and spectral representation}
An operator-valued measure $\bmu$ on $\T$ is a countably additive  function defined on Borel subsets of $\T$ with values in the set of non-negative self-adjoint operators. This definition means that an operator-valued measure is always \emph{finite}, i.e.~that $\bmu(\T)$ is a bounded operator. 

Let $U$ be a unitary operator on $\cH$  and let and  operator $\bB:\fD \to \cH$ have trivial kernel, and let $\ran \bB$ be star-cyclic for $U$. Define the operator-valued spectral measure  $\bmu =\bmu\ci U$ (with values in $B(\fD)$) as 
\begin{align}
\label{e: bmu 01}
\bmu (E) = \bB^* \cE (E) \bB, 
\end{align}
for any Borel  $E\subset \T$; here $\cE=\cE\ci U$ is the (projection-valued) spectral measure of $U$. An equivalent definition is that $\bmu$ is the unique operator-valued measure such that 
\begin{align*}
\bB^*   U^n \bB = \int_\T \xi^n\dd\bmu(\xi) \qquad \forall n\in \Z, 
\end{align*}
or equivalently, 
\begin{align}
\label{e: bmu 03}
\bB^* (\bI - z U^*)^{-1}\bB = \int_\T \frac1{1-z\overline\xi}\dd\bmu(\xi) =: \cC\bmu(z)  \qquad \forall z\in\C\setminus\T. 
\end{align}
The operator $U$ is unitarily equivalent to the multiplication operator $M_\xi$ by the independent variable $\xi$ in the weighted space $L^2(\bmu)$. 

Let us recall that the weighted space $L^2(\bmu)$ with the operator-valued measure $\bmu$ is defined as follows.  First the inner product in $L^2(\bmu)$ is introduced on  functions of the form $f=\f \bx$, where $\f$ is a scalar-valued measurable function and $\bx\in \fD$:
\begin{align*}
\left( \f \bx, \psi\by \right)\ci{L^2(\bmu)} := \int_\T \f(\xi) \overline{\psi(\xi)}\left(\dd\bmu(\xi) \bx, \by \right)\ci{\fD}.
\end{align*}
This inner product is then extended by linearity to the set of all (finite) linear combinations of such functions. Such linear combinations (of course,  modulo the class of functions of norm $0$) form an inner product space, and its completion is, by definition, the weighted space $L^2(\bmu)$. 

The unitary operator $\cV: \cH\to L^2(\bmu)$ such that $M_\xi = \cV U \cV^*$ is also well-known. 
Namely, for $x\in\fD$, 
\begin{align*}
\cV [\f(U)\bB x ]= \f(\fdot)  x \in L^2(\bmu). 
\end{align*}

\subsection{Trace class operator-valued measures, spectral representation and spectral multiplicity function}
\label{s: trace class measures}
The representation of a unitary operator as a multiplication operator in the weighted space $L^2(\bmu)$ with operator-valued measure looks like ``abstract nonsense'', and the model looks more complicated than the original object. However, when the measure $\bmu$ takes values in the set $\fS_1$ of trace class operators, all objects are significantly simplified. 

If $\bmu$ is taking values in the set $\fS_1$ of trace class operators, we can define the scalar-valued measure $\bmu$ as $\mu:=\tr \bmu$. In this case the operator-valued measure $\bmu$ can be represented as 
\begin{align*}
\dd\bmu = W \dd\mu, 
\end{align*}
where $\| W(\xi)\|\le \| W(\xi)\|\ci{\fS_1} =1$ $\mu$-a.e.~on $\T$. 

It is not hard to see that in this case the measure $\mu=\tr\bmu$ is a scalar spectral measure of the operator $U$ that can be used in the von Neumann direct integral \eqref{e: direct integral 01}. The inner product in  weighted space $L^2(\bmu)$  can be computed (for  measurable functions $f$ and $g$) as   
\begin{align*}
\left( f, g\right)\ci{L^2(\bmu)} = \int_\T \bigl( W(\xi) f(\xi), g(\xi)  \bigr)\ci{\fD} \dd\mu(\xi) . 
\end{align*}

The weighted space $L^2(\bmu)$ in this case consists of all measurable functions for which $\|f\|\ci{L^2(\bmu)}<\infty$ (taking the obvious quotient space over the set of functions of norm $0$). 

It is also not hard to see that in this case the dimension function $N\ci U(\xi)$ can be computed as 
\begin{align*}
N\ci U(\xi) = \rk W(\xi)  , \qquad \mu\text{-a.e.}
\end{align*}
(recall that we assume that $\ran \bB$ is star-cyclic for $U$).

We presented just the fact we  will need; an interested reader can find more details in \cite{Kuroda1967}.

\subsection{Characteristic function via Cauchy--Herglotz integral of a trace class measure}
Recall, that we defined the following Cauchy transforms, 
\begin{align*}
\cC\bmu (z) := \int_\T \frac{\dd \bmu(\xi)}{1-z\overline\xi}, \qquad 
\cC_1\bmu (z) := \int_\T \frac{z\overline\xi}{1-z\overline\xi} \dd \bmu(\xi), \qquad 
\cC_2\bmu (z) := \int_\T \frac{1+z\overline\xi}{1-z\overline\xi}\dd \bmu(\xi) . 
\end{align*}

In \cite{Liaw-Treil_APDE_2019} we obtained the following formula for the characteristic function $\theta=\theta\ci\Gamma$ of the operator 
\begin{align}
\label{e: T_Gamma}
T\ci\Gamma = U + \bB (\Gamma-\bI) \bB^*U. 
\end{align}
where $\bB$ is an isometry acting from $\fD\to \cH$ and  $\Gamma$ (and therefore $\Gamma^*$) is a 
strict contraction. 

Recall that for a contraction $\Gamma$ the \emph{defect operator} $D\ci\Gamma$ is defined by $D\ci\Gamma:= (\bI -\Gamma^*\Gamma)^{1/2}$. 

The characteristic function $\theta=\theta\ci\Gamma$ of $T\ci\Gamma$  was proved to be given by 
\begin{align}
\label{e: char funct F1 01}
\theta\ci{\Gamma} (z)  &= -\Gamma  +   D\ci{\Gamma^*} F_1(z) \Bigl( \bI -(\Gamma^*-\bI)F_1(z)\Bigr)^{-1}  D\ci\Gamma
\\
\label{e: char funct F1 02}
&= -\Gamma  +   D\ci{\Gamma^*}  \Bigl( \bI -F_1(z)(\Gamma^*-\bI)\Bigr)^{-1} F_1(z)  D\ci\Gamma,
\end{align}
where $F_1(z)=\cC_1\bmu(z)$, and $D\ci\Gamma$ and $D\ci{\Gamma^*}$ are the defect operators. Here the measure $\bmu$ was given by \eqref{e: bmu 01}, or equivalently by \eqref{e: bmu 03}. 

This formula was proved in \cite{Liaw-Treil_APDE_2019} for the case of finite rank perturbations. However, the only place, where the finite rank was used in the proof was in the definition of the measure $\bmu$, which was in that case expressed explicitly via the scalar spectral measure in the von Neumann direct integral \eqref{e: direct integral 01} and the matrix of the operator $\bB$. Such explicit expression is not possible in the general case, but what one really needs for the proof of the formula, is the identity
\begin{align}\label{e:F1}
z {\bB}^*(\bI\ci \cH - z U^*)^{-1}U^*{\bB} = 
\cC_1\bmu(z)=: F_1(z). 
\end{align}

For convenience, we include the proof of \eqref{e: char funct F1 01}, \eqref{e: char funct F1 02} and \eqref{e:F1} in Section \ref{s:APP}.

Moving forward, we would like to express the characteristic function $\theta$ in terms of the Cauchy--Herglotz integral $\cC_1\wt\bmu$ of some $\fS_1$-valued measure $\wt\bmu$: this will allow us to express the defect functions $\Delta := \left( \bI - \theta^*\theta  \right)^{1/2}$ and $\Delta_* := \left( \bI - \theta\theta^*  \right)^{1/2}.$

First, let us express $\theta$ in terms of $F_2 :=\cC_2\bmu$. Using the fact that $\Gamma=\Gamma^*$ we can rewrite \eqref{e: char funct F1 01} as 
\begin{align*}
\theta\ci\Gamma &= D\ci{\Gamma} \left( -  D\ci{\Gamma}^{-1} \Gamma D\ci{\Gamma}^{-1}  + F_1 \left(\bI - (\Gamma -\bI)F_1 \right)^{-1} \right) D \ci{\Gamma}  \\
&= D\ci{\Gamma}^{-1}   \left( -  \Gamma +  \Gamma (\Gamma -\bI)F_1  + D\ci{\Gamma}^2 F_1\right) \left(\bI - (\Gamma -\bI)F_1 \right)^{-1}  D \ci{\Gamma} ;
\end{align*}
note that while the operator $D\ci\Gamma^{-1}$ is unbounded, it is densely defined, and the above 
identity can be understood as an identity for bilinear forms on a dense linear submanifold 
$\fD\times \ran D\ci\Gamma \subset \fD\times\fD$.

Since $F_1= (F_2-\bI)/2$ we can continue
\begin{align*}
\theta\ci\Gamma &= D\ci{\Gamma}^{-1}   \Bigl( -2 \Gamma +   ( \bI -\Gamma)(F_2 -\bI) \Bigr) \Bigl(2\bI - (\Gamma -\bI)(F_2 -\bI) \Bigr)^{-1}  D \ci{\Gamma}  
\\
&=  D\ci{\Gamma}^{-1}   \Bigl( -(\bI + \Gamma) +   ( \bI -\Gamma)F_2  \Bigr) (\bI -\Gamma)D\ci{\Gamma}^{-1} \\
&\qquad\qquad\qquad\qquad
\left[ D \ci{\Gamma}^{-1} \Bigl( (\bI + \Gamma) - (\Gamma -\bI)F_2 \Bigr)  (\bI -\Gamma)D\ci{\Gamma}^{-1} \right]^{-1};
\end{align*}
note that for a strict contraction $\Gamma=\Gamma^*\ge \bO$
the operator $(\bI -\Gamma)D\ci{\Gamma}^{-1} = (\bI -\Gamma)^{1/2} (\bI +\Gamma)^{-1/2} $ is bounded, so the above expression is again well defined. 

Using the identity 
\begin{align*}
D\ci\Gamma^{-1}(\bI - \Gamma)(\bI + \Gamma)D\ci\Gamma^{-1}=\bI
\end{align*}
we can rewrite  $\theta\ci\Gamma  $ as 
\begin{align*}
\theta\ci\Gamma  & = 
\left( \bbeta F_2 \bbeta - \bI \right) \left( \bbeta F_2 \bbeta + \bI \right)^{-1}, 
\end{align*}
where
\begin{align*}
\bbeta = \bbeta^* : = D\ci{\Gamma}^{-1} (\bI - \Gamma).  
\end{align*}
Define the measure $\wt\bmu := \bbeta \bmu\bbeta$. Then, trivially, $\cC_2\wt\bmu = \bbeta \cC_2 \bmu \bbeta = \bbeta F_2  \bbeta$, so 
\begin{align}
\label{e: theta 04}
\theta = \frac{\cC_2\wt\bmu-\bI}{\cC_2\wt\bmu+\bI}
\end{align}
(we write it as a fraction to emphasize that the terms commute).   Note that for $z\in\D$ we have $\re (\cC_2\wt\bmu (z) +\bI) \ge\bI$, so the operator $\cC_2\wt\bmu (z) +\bI$ is invertible and the right hand side of \eqref{e: theta 04} is well defined. 

Finally, under our assumptions that $\Gamma=\Gamma^*$ is a strict contraction and $\bI-\Gamma\in \fS_1$, the formula
\begin{align*}
\bbeta= D\ci{\Gamma}^{-1} (\bI - \Gamma) = (\bI -\Gamma)^{1/2} (\bI +\Gamma)^{-1/2}
\end{align*}
shows that $\bbeta\in\fS_2$ (the Hilbert--Schmidt class). Therefore, the measure $\wt\bmu$ is $\fS_1$-valued. 

\section{Proof of main results}
\label{s:proof main}
\subsection{Proof of Theorem \ref{t: ac spectrum main}: the principal case}\label{ss-ProofTHM1.1}
In this subsection we prove Theorem \ref{t: ac spectrum main} for the main special case when
\begin{align*}
T= U +\bB (\Gamma-\bI)\bB^* U, 
\end{align*}
where $\ran \bB$ is star-cyclic for $U$ and $\Gamma=\Gamma^*\ge\bO$ is a strict contraction, $\bI-\Gamma\in\fS_1$. The general case can be easily obtained from it using Lemma \ref{l: reduction 01}, see Section \ref{s: Proof general case} below. 

\subsubsection{Some technical lemmas}
The function $\theta$ is defined in the open unit disc $\D$. Since it is a bounded analytic operator-valued function, it possess non-tangential boundary values 
\begin{align*}
\theta(\xi):= \lim_{z\to\xi\sphericalangle}\theta(z), \qquad \xi\in \T
\end{align*}
(in the strong operator topology) a.e.~on $\T$. 

\begin{lm}
\label{l: I-theta invertible ae}
The  function $\bI - \theta$ is invertible a.e.~on $\T$.
\end{lm}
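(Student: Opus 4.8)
The plan is to work from the explicit representation \eqref{e: theta 04}, namely $\theta = (\cC_2\wt\bmu - \bI)(\cC_2\wt\bmu + \bI)^{-1}$, where $\wt\bmu$ is an $\fS_1$-valued measure. First I would record the elementary identity
\begin{align*}
\bI - \theta = (\cC_2\wt\bmu + \bI)^{-1}\bigl[(\cC_2\wt\bmu + \bI) - (\cC_2\wt\bmu - \bI)\bigr] = 2(\cC_2\wt\bmu + \bI)^{-1},
\end{align*}
valid for every $z \in \D$ since $\re(\cC_2\wt\bmu(z) + \bI) \ge \bI$ makes $\cC_2\wt\bmu(z) + \bI$ boundedly invertible there. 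Thus, inside the disc, $(\bI - \theta(z))^{-1} = \tfrac12(\cC_2\wt\bmu(z) + \bI)$. So the whole question reduces to understanding the boundary behavior of $\cC_2\wt\bmu(z)$ as $z \to \xi\sphericalangle$, and in particular showing that the (a.e.-defined) boundary values $\cC_2\wt\bmu(\xi)$ exist as bounded operators a.e., which will immediately give that $\bI - \theta(\xi) = 2(\cC_2\wt\bmu(\xi) + \bI)^{-1}$ is invertible a.e.

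The key step is therefore a boundary-value statement for the operator-valued Cauchy--Herglotz transform $\cC_2\wt\bmu$ of a trace class measure. Write $\dd\wt\bmu = \wt W\,\dd\wt\mu$ with $\wt\mu = \tr\wt\bmu$ and $\|\wt W(\xi)\|_{\fS_1} = 1$ $\wt\mu$-a.e., as in Section \ref{s: trace class measures}. Decompose $\wt\mu = \wt\mu_{ac} + \wt\mu_s$. For the singular part $\wt\mu_s$, the scalar Poisson integral of $\wt\mu_s$ tends to $0$ a.e.\ $\fm$, and since $\re\cC_2\wt\bmu(z)$ is dominated (in the operator order, tested against any fixed vector) by that scalar Poisson integral applied to $\wt W$, the singular part contributes a boundary value of $\bO$ at $\fm$-a.e.\ $\xi$; for the a.c.\ part one uses that the Cauchy--Herglotz transform of $w\,\dd\fm$ with $w \in L^1$ has finite non-tangential boundary values a.e., applied to the scalar density $\tr\wt W(\cdot)\,\wt w(\cdot) \in L^1(\fm)$ to control the trace norm, so that $\cC_2\wt\bmu(\xi)$ exists in $\fS_1$ (hence in the strong, indeed norm, operator topology) for a.e.\ $\xi$. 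Combining, $\cC_2\wt\bmu(\xi)$ is a bounded (trace class) operator for a.e.\ $\xi \in \T$, with $\re\cC_2\wt\bmu(\xi) \ge \bO$, so $\cC_2\wt\bmu(\xi) + \bI$ is invertible and $(\bI - \theta(\xi))^{-1} = \tfrac12(\cC_2\wt\bmu(\xi) + \bI)$ a.e.

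I expect the main obstacle to be the careful bookkeeping in the boundary-value argument: one must make sure the a.e.\ statements for the scalar densities $\tr\wt W\,\wt w$ and for the scalar Poisson integral of $\wt\mu_s$ upgrade to genuine operator (or $\fS_1$-norm) convergence of $\cC_2\wt\bmu(z)$, and that the a.e.\ boundary value of $\theta$ used elsewhere in the paper is consistent with the formula $\theta(\xi) = (\cC_2\wt\bmu(\xi) - \bI)(\cC_2\wt\bmu(\xi) + \bI)^{-1}$ obtained this way. If one prefers to avoid invoking an operator-valued boundary-value theorem, an alternative is purely soft: $\re(\bI - \theta(z))^{-1} = \re\bigl(\tfrac12(\cC_2\wt\bmu(z) + \bI)\bigr) \ge \tfrac12\bI$ uniformly in $z \in \D$, so $(\bI - \theta)^{-1}$ is a bounded analytic operator-valued function on $\D$, hence has non-tangential boundary values in the strong operator topology a.e.; at such $\xi$, both $\bI - \theta(\xi)$ and $(\bI - \theta)^{-1}(\xi)$ exist and are mutually inverse (pass to the limit in $(\bI - \theta(z))(\bI - \theta(z))^{-1} = \bI$, say against a fixed vector), which is exactly the claim. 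This second route is shorter and is likely the one I would write up.
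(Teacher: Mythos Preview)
Your preferred ``second route'' has a genuine gap: the lower bound $\re(\bI-\theta(z))^{-1}\ge\tfrac12\bI$ does \emph{not} imply that $(\bI-\theta)^{-1}$ is a bounded (i.e.\ $H^\infty$) operator-valued function on $\D$. A lower bound on the real part gives no control on the imaginary part; indeed $(\bI-\theta(z))^{-1}=\cC\wt\bmu(z)$ is typically unbounded as $z$ approaches the support of the singular part of $\wt\mu$. Already in the scalar case $\wt\bmu=\delta_1$ one gets $\theta(z)=z$ and $(\bI-\theta(z))^{-1}=(1-z)^{-1}$, which satisfies $\re(1-z)^{-1}\ge\tfrac12$ on $\D$ but is not bounded there. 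So you cannot invoke the $H^\infty$ boundary-value theorem for $(\bI-\theta)^{-1}$, and the short argument collapses.

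Your first route is closer to what is needed, but as written it also has holes: for the singular part you only treat $\re\cC_2\wt\bmu$ (the Poisson extension), not the full transform whose imaginary part does \emph{not} vanish a.e.; and for the absolutely continuous part, convergence of the scalar trace $\tr\cC_2\wt\bmu$ does not by itself yield convergence of $\cC_2\wt\bmu$ in any operator topology. The paper closes this gap by noting that $\|\wt W(\xi)\|\ci{\fS_2}\le\|\wt W(\xi)\|\ci{\fS_1}=1$, so $\wt W\in L^2(\wt\mu;\fS_2)$ with $\fS_2$ a Hilbert space, and then proving a separate lemma: for $f\in L^2(\mu;E)$ with $E$ Hilbert, the non-tangential boundary values of $\cC[f\mu]$ exist a.e.\ in the norm of $E$. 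The proof of that lemma writes $\cC[f\mu]=(\cV_\mu f)\cdot\cC\mu$, where $\cV_\mu:L^2(\mu)\to H^2$ is the normalized Cauchy transform (bounded by a result of Aleksandrov); applying the scalar bound coordinatewise gives $\cV_\mu f\in H^2(E)$, and vector-valued $H^2$ functions have boundary values in $E$ a.e. With $E=\fS_2$ one obtains $\cC\wt\bmu(\xi)$ a.e.\ in the $\fS_2$ norm, which is enough to pass to the limit in $(\bI-\theta(z))\cC\wt\bmu(z)=\cC\wt\bmu(z)(\bI-\theta(z))=\bI$ and conclude.
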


\begin{proof}
One can see from \eqref{e: theta 04} that for $z\in\D$
\begin{align}
\label{e: I - theta invertible}
 (\bI-\theta(z))^{-1} = (\cC_2\wt\bmu(z) + \bI)/2 = \cC \wt\bmu(z). 
\end{align}
As we discussed at the end of Section \ref{s:char}, the measure $\wt\bmu$ is $\fS_1$-valued, so it can be represented as 
\begin{align*}
\dd\wt\bmu = W\dd\mu, 
\end{align*}
where the scalar measure $\mu$ is given by $\mu = \tr \wt\bmu$. In this case 
\begin{align*}
\|W(\xi)\|\ci{\fS_2} \le \|W(\xi)\|\ci{\fS_1} = 1 \qquad \mu\text{-a.e.~on }\T. 
\end{align*}
The space $\fS_2$ is a Hilbert space, so by Lemma \ref{l: Cauchy boundary values Hilbert} below the non-tangential boundary values of $\cC\wt\bmu$ exist a.e.~on $\T$. Note that for our purposes it is sufficient that the boundary values exist in the strong operator topology, while  Lemma \ref{l: Cauchy boundary values Hilbert} states that the boundary values exist in the (much stronger) topology of  $\fS_2$. 

The equality \eqref{e: I - theta invertible} means that for all $z\in\D$
\begin{align*}
\cC \wt\bmu(z) (\bI-\theta(z))      =   (\bI-\theta(z)) \cC \wt\bmu(z) = \bI, 
\end{align*}
and taking the non-tangential boundary values we conclude that the same identities hold a.e.~on $\T$. But this exactly means that $\bI - \theta$ is invertible a.e.~on $\T$. 
\end{proof}

\begin{lm}
\label{l: Cauchy boundary values Hilbert}
Let $\mu$ be a (finite) Borel measure on $\T$, and let $f\in L^2(\mu; E)$  (where $E$ is a Hilbert space). Then the non-tangential boundary values 
\begin{align*}
[\cC f\mu](\xi) = \lim_{z\to\xi\sphericalangle} [\cC f\mu](z) , \qquad \xi\in \T
\end{align*}
(in the norm topology of $E$) exist a.e.~on $\T$.  
\end{lm}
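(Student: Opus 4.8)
\textbf{Proof proposal for Lemma \ref{l: Cauchy boundary values Hilbert}.}

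The plan is to reduce the vector-valued statement to the classical scalar theory of the Cauchy transform of a measure, using the Hilbert-space structure of $E$ to handle the abstract setting. Write $\cC f\mu(z) = \int_\T (1-z\overline\xi)^{-1} f(\xi)\,\dd\mu(\xi)$, an $E$-valued analytic function on $\D$. The first reduction is to the scalar case: for any fixed $e\in E$, the function $\langle \cC f\mu(z), e\rangle_E = \int_\T (1-z\overline\xi)^{-1}\langle f(\xi),e\rangle_E\,\dd\mu(\xi)$ is the Cauchy transform of the scalar $L^1(\mu)$ (in fact $L^2(\mu)$) density $\langle f(\fdot), e\rangle_E$ against $\mu$, so by the classical result (the Cauchy transform of an $L^1(\mu)$-density times $\mu$ has non-tangential boundary values a.e., e.g.\ via the fact that $g\,\dd\mu \ll \fm$-a.e.\ on $\T$ after splitting $\mu$ into a.c.\ and singular parts and using the Fatou-type theorem for Cauchy integrals of $L^1$ functions and of singular measures) the scalar limit exists a.e. This gives weak non-tangential convergence; the work is to upgrade it to norm convergence.

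To get norm convergence a.e., the key step is a maximal-function bound. Let $g := \|f(\fdot)\|_E \in L^2(\mu) \subset L^1(\mu)$, and define the nontangential maximal function $M(z_0) := \sup_{z \to z_0 \sphericalangle} \|\cC f\mu(z)\|_E$. Pointwise in $z$ one has $\|\cC f\mu(z)\|_E \le \int_\T |1-z\overline\xi|^{-1} g(\xi)\,\dd\mu(\xi) = \cC(g\mu)(z)$ in absolute value after dominating, so the $E$-valued nontangential maximal function is controlled by the scalar nontangential maximal function of the Cauchy transform of the finite positive measure $g\,\dd\mu$. The latter is of weak type $(1,1)$ with respect to $\fm$ (classical Cauchy-integral maximal estimate for finite measures), hence finite $\fm$-a.e.\ on $\T$. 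So $M(z_0) < \infty$ for $\fm$-a.e.\ $z_0$.

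Now combine the two ingredients by a standard density-plus-maximal-function argument. Fix a point $\xi_0$ where (a) the scalar limits $\lim_{z\to\xi_0\sphericalangle}\langle \cC f\mu(z), e\rangle_E$ exist for all $e$ in a fixed countable dense subset $E_0 \subset E$, and (b) $M(\xi_0) < \infty$; these two conditions hold off an $\fm$-null set. Condition (b) says the net $\{\cC f\mu(z) : z \to \xi_0\sphericalangle\}$ is norm-bounded in the Hilbert space $E$, hence relatively weakly compact; condition (a) forces every weak subnet limit to have the same inner products against the dense set $E_0$, hence to be a single vector $v_{\xi_0}$, so $\cC f\mu(z) \to v_{\xi_0}$ weakly as $z \to \xi_0\sphericalangle$. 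To promote weak to norm convergence I would run the same argument one notch up: the \emph{scalar} function $z \mapsto \|\cC f\mu(z)\|_E^2$ need not be a Cauchy transform, so instead I estimate the norm directly. Split $f = f\mathbf{1}_I + f\mathbf{1}_{\T\setminus I}$ for a small arc $I$ about $\xi_0$; the Cauchy transform of the far piece $f\mathbf{1}_{\T\setminus I}\mu$ is analytic across $I$, so it is continuous up to $\xi_0$ in norm, while the near piece contributes, at points of its nontangential approach region, at most $\cC(g\mathbf{1}_I\mu)(z)$, whose limsup at $\xi_0$ is controlled by the maximal function of $g\mathbf{1}_I\mu$, which tends to $0$ in measure as $|I|\to 0$ by the weak-$(1,1)$ bound together with $g \in L^1(\mu)$; a routine diagonal/exhaustion over a sequence of shrinking arcs then shows the $\limsup$ of $\|\cC f\mu(z) - v_{\xi_0}\|_E$ is $0$ for $\fm$-a.e.\ $\xi_0$.

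The main obstacle is exactly this last upgrade from weak to norm convergence: the modulus $\|\cC f\mu(z)\|_E$ is not itself a Cauchy transform, so the classical scalar Fatou theorem does not apply to it directly, and one genuinely needs the maximal-function localization argument (small-arc splitting plus weak-$(1,1)$ smallness of the near piece) to control the norm oscillation. Everything else—the weak boundary values, the pointwise domination by a scalar Cauchy transform, and the weak-type maximal estimate—is standard harmonic analysis applied coordinate-wise, and the separability of $E$ makes the "for all $e$ in a countable dense set" step legitimate.
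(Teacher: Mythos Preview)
Your approach is quite different from the paper's, and the final step has a genuine gap.

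\textbf{What the paper does.} The paper avoids the weak-to-norm upgrade entirely by using Aleksandrov's normalized Cauchy transform: the operator $\cV_\mu f := (\cC f\mu)/(\cC\mu)$ is bounded from $L^2(\mu)$ to the Hardy space $H^2$, and applying the scalar bound coordinate-wise gives boundedness from $L^2(\mu;E)$ to $H^2(E)$. Since $H^2(E)$-functions have nontangential boundary values in the norm of $E$ a.e., and the scalar function $\cC\mu$ has finite nonzero boundary values a.e., the identity $\cC f\mu = (\cV_\mu f)\cdot \cC\mu$ gives the result in one line.

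\textbf{The gap in your argument.} Your near/far splitting uses an arc $I$ centred at the point $\xi_0$ under consideration, so $I$ depends on $\xi_0$. The weak-$(1,1)$ inequality you invoke, however, bounds $\fm\{\xi_0: M_I(\xi_0)>\lambda\}$ for a \emph{fixed} function $g\mathbf 1_I$, i.e.\ for a fixed $I$. There is no obvious way to pass from ``$M_I\to 0$ in $\fm$-measure as $|I|\to 0$'' (for each fixed centre) to the a.e.\ statement you need, because when you sum the weak-$(1,1)$ bounds over a partition of $\T$ into small arcs you recover only $C\|g\|_{L^1(\mu)}/\lambda$, which does not shrink. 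The phrase ``routine diagonal/exhaustion'' hides a step that, as written, does not go through.

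\textbf{How to fix it within your framework.} Replace the local near/far splitting by a \emph{global} approximation: choose simple functions $h_n\to f$ in $L^2(\mu;E)$ (finite $E$-range). For each $h_n$ the Cauchy transform $\cC(h_n\mu)$ is a finite linear combination of scalar Cauchy transforms times fixed vectors in $E$, hence has norm boundary values a.e.\ by the scalar theory. Then the nontangential oscillation of $\cC f\mu$ at $\xi_0$ is bounded by twice the maximal function of $\|f-h_n\|_E\,\mu$ at $\xi_0$, and now the weak-$(1,1)$ bound applies to the fixed global density $\|f-h_n\|_E$, giving $\fm\{\text{oscillation}>2\lambda\}\le C\|f-h_n\|_{L^1(\mu;E)}/\lambda\to 0$. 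This is the standard density-plus-maximal-function argument, and with it your weak-convergence Steps 1--3 become superfluous. The paper's route via $\cV_\mu$ is shorter still, since it offloads the maximal inequality onto the classical theory of vector-valued $H^2$.
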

\begin{proof}
We use the following well-known result (\cite[Theorem 1.1]{Aleksandrov_Zap_1989}, see also \cite[Proposition 10.2.3]{cimaross})  that for a measure $\mu$ the operator $\cV_\mu$, 
\begin{align*}
\cV_\mu f (z) := \frac{[\cC f\mu] (z)}{[\cC\mu] (z)}, \qquad f\in L^2(\mu)
\end{align*}
is a bounded operator from $L^2(\mu)$ to the Hardy space $H^2$, $\|\cV_\mu\|\ci{L^2(\mu)\to H^2} \le C(\mu)$.%
\footnote{In fact, it is well-known and not hard to show that for a probability measure $\mu$ the operator $\cV_\mu$ is a contraction. Simple scaling then allows one to get the estimate $\|\cV_\mu\|\ci{L^2(\mu)\to H^2} \le \mu(\T)^{-1/2}$.} 

The operator $\cV_\mu$ is defined on scalar-valued functions, but the same formula defines an operator on the vector-valued space $L^2(\mu; E)$. 
Take $f\in L^2(\mu;E)$. Applying the scalar estimate to each coordinate of $f$, we conclude that 
\begin{align*}
\|\cV_\mu f \|\ci{H^2(E)} \le C(\mu) \| f\|\ci{L^2(\mu;E)} \qquad \forall f\in L^2(\mu;E). 
\end{align*}
It is well-known that for $g\in H^2(E)$ the non-tangential boundary values (in the norm topology of $E$) exist a.e.~on $\T$. 
It is also well-known that (finite and non-zero) non-tangential boundary values of $\cC\mu$ exist a.e.~on $\T$. Since for $f\in L^2(\mu;E)$
\begin{align*}
[\cC f\mu ] (z) = \cV_\mu f(z)/[\cC\mu](z), 
\end{align*}
we immediately get the conclusion of the lemma. 
\end{proof}

\subsubsection{Computing the defect functions}
Recall that the spectral measure $\wt\bmu$ is represented as $\dd\wt\bmu = W \dd\mu$, where $\mu=\tr\wt \bmu$. 
Denote by $w$ the Lebesgue density of $\mu$ (i.e.~of its absolutely continuous part), $w:=\dd \mu/\dd \fm$.  
\begin{prop}
\label{p: Deltas}
The defect functions $\Delta$ and $\Delta_*$ can be computed as
\begin{align}
\label{e: Delta}
\Delta(\xi)^2 & = (\bI -\theta(\xi)^*) W(\xi)w(\xi) (\bI - \theta(\xi)), \\
\label{e: Delta*}
\Delta_*(\xi)^2 & = (\bI -\theta(\xi)) W(\xi)w(\xi) (\bI - \theta(\xi)^*)
\end{align}
a.e.~on $\T$. 
\end{prop}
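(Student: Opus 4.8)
The plan is to start from the formula $\theta = (\cC_2\wt\bmu - \bI)(\cC_2\wt\bmu+\bI)^{-1}$ in \eqref{e: theta 04} and compute $\bI - \theta^*\theta$ and $\bI - \theta\theta^*$ directly, in the disc first, then pass to the boundary. The key algebraic identity I would exploit is the following: writing $G(z) := \cC_2\wt\bmu(z)$, we have from \eqref{e: I - theta invertible} that $(\bI - \theta)^{-1} = (G + \bI)/2$, so $G = 2(\bI-\theta)^{-1} - \bI = (\bI + \theta)(\bI - \theta)^{-1}$. Since $\theta$ and $(\bI-\theta)^{-1}$ commute, one computes
\begin{align*}
G + G^* = (\bI - \theta^*)^{-1}\bigl[(\bI - \theta^*)(\bI + \theta) + (\bI + \theta^*)(\bI - \theta)\bigr](\bI-\theta)^{-1} = 2(\bI - \theta^*)^{-1}(\bI - \theta^*\theta)(\bI - \theta)^{-1},
\end{align*}
so that $\bI - \theta^*\theta = \tfrac12 (\bI - \theta^*)(G + G^*)(\bI - \theta)$, and symmetrically $\bI - \theta\theta^* = \tfrac12 (\bI - \theta)(G + G^*)(\bI - \theta^*)$ using the same computation with the factors on the other side (again using commutativity of $\theta$ with $(\bI-\theta)^{-1}$).

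Next I would identify $\tfrac12\re G(z) = \tfrac12(G(z) + G(z)^*)$ as the Poisson integral of $\wt\bmu$: since $\cC_2\wt\bmu(z) = \int_\T \frac{1 + z\bar\xi}{1 - z\bar\xi}\,\dd\wt\bmu(\xi)$ and $\re\frac{1+z\bar\xi}{1-z\bar\xi}$ is the Poisson kernel, we get $\tfrac12\re G(z) = \int_\T P_z(\xi)\,\dd\wt\bmu(\xi) = \int_\T P_z(\xi) W(\xi)\,\dd\mu(\xi)$. By standard differentiation theory for the Poisson integral of an $\fS_1$-valued (indeed, even just trace-class-norm-integrable) measure, the non-tangential boundary values of this Poisson integral equal $W(\xi)w(\xi)$ a.e.\ on $\T$, where $w = \dd\mu/\dd\fm$ and the singular part contributes zero a.e. Here I would either cite a vector/operator-valued Fatou theorem or reduce to the scalar case by pairing with a countable dense set of vectors (using that $W \ge 0$ with $\|W\|_{\fS_1} = 1$ $\mu$-a.e., so the family is dominated); Lemma~\ref{l: Cauchy boundary values Hilbert} already gives the analogous statement for Cauchy integrals, and the Poisson version is in the same spirit.

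Finally, I would combine these: $\bI - \theta(z)^*\theta(z) = (\bI - \theta(z)^*)\bigl(\int_\T P_z W\,\dd\mu\bigr)(\bI - \theta(z))$ holds for all $z \in \D$, and letting $z \to \xi$ non-tangentially — using that $\theta$ has non-tangential boundary values in the strong operator topology (bounded analytic function), that $\bI - \theta(\xi)$ is invertible a.e.\ by Lemma~\ref{l: I-theta invertible ae} (so no cancellation pathology), and that the Poisson integral converges to $W(\xi)w(\xi)$ a.e.\ — yields \eqref{e: Delta}, and symmetrically \eqref{e: Delta*}. The main obstacle I anticipate is the justification of taking boundary values of a product of three factors each converging only in the strong (not norm) operator topology: strong convergence is not preserved under products in general. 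I would handle this by noting that the middle factor $\int_\T P_z W\,\dd\mu$ converges in $\fS_1$-norm (hence in operator norm) a.e.\ — this is where the trace-class hypothesis is essential — so the product of a strongly convergent sequence, a norm-convergent sequence, and a strongly convergent sequence does converge strongly, which is all that is needed since $\Delta^2$, $\Delta_*^2$ need only be identified as operators (equivalently, via their quadratic forms on a dense set).
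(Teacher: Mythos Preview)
Your proof is correct and is essentially the paper's own argument: both compute $\re\cC_2\wt\bmu = (\bI-\theta^*)^{-1}(\bI-\theta^*\theta)(\bI-\theta)^{-1}$ (equivalently $\bI-\theta^*\theta=(\bI-\theta^*)\,\cP\wt\bmu\,(\bI-\theta)$) in $\D$, identify $\re\cC_2\wt\bmu=\cP\wt\bmu$ with non-tangential boundary values $Ww$, and then pass to the boundary, invoking Lemma~\ref{l: I-theta invertible ae}. Your added discussion of why the triple product survives the boundary limit (middle factor converging in norm, outer factors strongly) makes explicit a point the paper handles in one line; note the harmless slip where you write ``$\tfrac12\re G(z)=\tfrac12(G(z)+G(z)^*)$'' but clearly intend $\re G(z)=\tfrac12(G(z)+G(z)^*)$, consistent with your subsequent formulas.
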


\begin{proof}
Let $\cP\wt\bmu$ be the Poisson extension of the measure $\wt\bmu$. Trivially, 
\begin{align*}
\cP\wt\bmu = \re \wt F_2, 
\end{align*}
where $\wt F_2 = \cC_2\wt\bmu$. The representation $\dd\wt\bmu = W \dd\mu$, implies that the non-tangential boundary values of $\cP \wt\bmu$ exist and coincide with $Ww$ a.e.~on $\T$; the non-tangential boundary values exist a.e.~in the $\fS_2$ norm, cf.~Lemma \ref{l: Cauchy boundary values Hilbert} above, but for our purposes taking limits in the strong operator topology will be enough.    

So, we have
\begin{align*}
\cP\wt\bmu = \re \wt F_2 = \re [(\bI + \theta)(\bI-\theta)^{-1}  ].
\end{align*}
Computing we get (for $z\in\D$)
\begin{align*}
\cP\wt\bmu 
&=
\re[(\bI+\theta)(\bI-\theta)^{-1}]
=
\frac{1}{2} [(\bI+\theta)(\bI-\theta)^{-1}+(\bI-\theta^*)^{-1}(\bI+\theta^*)]\\
&=
\frac{1}{2} (\bI-\theta^*)^{-1}\left[(\bI-\theta^*)(\bI+\theta)+(\bI+\theta^*)(\bI-\theta)\right](\bI-\theta)^{-1}\\
&=
\frac{1}{2} (\bI-\theta^*)^{-1}[2\bI -2 \theta^*\theta](\bI-\theta)^{-1}
\\
&=(\bI-\theta^*)^{-1}[\bI - \theta^*\theta](\bI-\theta)^{-1}.
\end{align*}
Taking the non-tangential boundary values we get that 
\begin{align*}
Ww = (\bI-\theta^*)^{-1}[\bI - \theta^*\theta](\bI-\theta)^{-1} = (\bI-\theta^*)^{-1}\Delta^2(\bI-\theta)^{-1}
\end{align*}
a.e.~on $\T$. Since the function $\bI-\theta$ is invertible a.e.~on $\T$ by Lemma \ref{l: I-theta invertible ae}, this identity is equivalent to \eqref{e: Delta}. 

To get \eqref{e: Delta*} we just need to repeat the above calculation with the order of $\theta$ and $\theta^*$ interchanged, namely
\begin{align*}
\cP\wt\bmu 
&=
\re[(\bI+\theta)(\bI-\theta)^{-1}]
=
\frac{1}{2} [(\bI-\theta)^{-1}(\bI+\theta)+(\bI+\theta^*) (\bI-\theta^*)^{-1}]\\
&=
\frac{1}{2} (\bI-\theta)^{-1}\left[ (\bI+\theta)(\bI-\theta^*)  + (\bI-\theta)(\bI+\theta^*)\right](\bI-\theta^*)^{-1}\\
&=
\frac{1}{2} (\bI-\theta)^{-1}[2\bI -2 \theta\theta^*](\bI-\theta^*)^{-1}
\\
&=(\bI-\theta)^{-1}[\bI - \theta\theta^*](\bI-\theta^*)^{-1}.
\end{align*}
Taking boundary values again, we get that 
\begin{align*}
Ww = (\bI-\theta)^{-1}\Delta_*^2(\bI-\theta^*)^{-1}
\end{align*}
which is equivalent to \eqref{e: Delta*} because $\bI -\theta$ is invertible a.e.~on $\T$. 
\end{proof}

\subsubsection{Completion of the proof of the principal case}
As we discussed above in Section \ref{s: trace class measures} the dimension function $N$ can be computed as $N(\xi) = \rk W(\xi)$ $\mu$-a.e. By Lemma \ref{l: I-theta invertible ae} the function $\bI-\theta$ is invertible a.e.~on $\T$, so the conclusion of Theorem \ref{t: ac spectrum main} (in the case we are considering)  immediately follows from identities  \eqref{e: Delta}, \eqref{e: Delta*}.  \hfill\qed

\subsection{Proof of Theorem \ref{t: ac spectrum main}: general case}
\label{s: Proof general case}
According to Lemma \ref{l: reduction 01} the operator $T$ can be represented as 
\begin{align*}
T= U_1 +\bB (\Gamma-\bI)\bB^* U_1
\end{align*}
where, as in Section \ref{ss-ProofTHM1.1}, we have $U-U_1\in \fS_1$, $\Gamma=\Gamma^*$ is a strict contraction, and $\bI-\Gamma\in\fS_1$.  Note that $\ran \bB$ is not necessarily star-cyclic for $U$. 
Denote $\cH_0:= \cspn\{U_1^n \ran \bB: n\in\Z  \}$, $\cH_1:= \cH_0^\perp$. The subspaces $\cH_0$, $\cH_1$ are reducing for both $U_1$ and $T$; moreover $T|_{\cH_1} = U_1|_{\cH_1}$ is trivially unitary and $T|_{\cH_0}$ is a completely non-unitary contraction on $\cH_0$, see \cite[Lemma 1.4]{Liaw-Treil_APDE_2019}. 

Denote $V:= U_1|_{\cH_1}$, $U_0:= U_1|_{\cH_0}$, $T_0:= T|_{\cH_0}$ (with the target space also restricted to the spaces $\cH_0$, $\cH_1$ respectively). Clearly
\begin{align*}
T_0 = U_0 +\bB (\Gamma-\bI)\bB^* U_0
\end{align*}
and $\ran\bB$ is star-cyclic for $U_0$. 
Recall that, as we discussed in Section \ref{s:intro}, the characteristic function of the contraction $T$ coincides with the characteristic function of its completely non-unitary part $T_0$. 
Therefore, by the discussion in Section \ref{ss-ProofTHM1.1} 
\begin{align*}
\rk \Delta(\xi) = \rk \Delta_*(\xi) = N\ci{U_0}(\xi)\qquad \text{a.e.~on }\T , 
\end{align*}
which gives us \eqref{e:rank1}. 
Adding $N\ci V(\xi)$ to both parts, and noticing that $N\ci V(\xi) + N\ci{U_0}(\xi) = N\ci{U_1}(\xi)$, we get 
that
\begin{align*}
\rk V(\xi) + \rk \Delta(\xi) 
= N\ci{U_1}(\xi)\qquad \text{a.e.~on }\T  .
\end{align*}
The above formula is exactly the identity \eqref{e:rank2} with $N\ci{U_1}(\xi)$ instead of $N\ci{U}(\xi)$. But $U-U_1\in\fS_1$, so by the classical Kato--Rosenblum theorem (or, more precisely Birman--Krein theorem%
\footnote{The statement that  we are using, about the preservation of the absolutely continuous parts of a unitary trace class perturbations of unitary operators, first appeared in \cite{Birman-Krein_DAN_1962}, which should be a proper reference. It is also can be obtained via linear fractional transformation from an appropriate version (difference of resolvents is trace class) of the Kato--Rosenblum theorem for self-adjoint operators due to S.~T.~Kuroda \cite{Kuroda1959, Kuroda1960}.} %
 \cite{Birman-Krein_DAN_1962}) the identity 
$N\ci U(\xi) = N\ci{U_1}(\xi)$ holds a.e.~on $\T$, so \eqref{e:rank2} holds. Thus Theorem \ref{t: ac spectrum main} is proved in full generality.
\hfill\qed


\subsection{Proof of the corollaries 
}\label{ss:corollaries}

As in Theorem \ref{t: ac spectrum main}, let $U$ be a unitary operator, let $K\in \fS_1$ and $T=U+K$. Further let $\theta$ be its characteristic function and let $T=V\oplus T_0$ be the decomposition of $T$ into unitary and c.n.u.~parts. 

Recall that a bounded analytic  operator-valued function $\theta$  on the unit disc is called  \emph{inner} if its boundary values $\theta(\xi)$ are isometries a.e.~on $\T$. The function $\theta$ is called $*$-inner (or co-inner) if the function $z\mapsto \theta(\overline z)^*$ is inner, which means that operators $\theta(\xi)^*$ are isometries a.e.~on $\T$. 

Finally, the function $\theta$ is called \emph{double inner} if it is both inner and $*$-inner, which means that boundary values $\theta(\xi)$ are unitary a.e.~on $\T$. 

\begin{proof}[Proof of Corollary \ref{c:1}]
Let $U$ have purely singular spectrum, which means that $N\ci U(\xi) = 0$ a.e.~on $\T$. Then equation \eqref{e:rank2} informs us that $ \rk\Delta(\xi) = 0$ a.e.~on $\T$, and by \eqref{e:rank1}, we obtain that also  $\rk\Delta_*(\xi) = 0$ a.e.~on $\T$. Therefore, by  the definition of the  defect functions $\Delta$ and $\Delta_*$, we have that $\theta(\xi)$ is unitary a.e.~on $\T$, i.e.~that $\theta$ is double inner.
\end{proof}

For the proofs of Corollaries \ref{c:2} and \ref{c:3}, recall the following result.

\begin{prop}[see, e.g.~{\cite[Proposition VI.3.5]{SzNF2010}}]\label{p:SzNF}
For a c.n.u.~contraction $T_0$ we have:
\begin{itemize}
    \item[(i)] $T_0$ is asymptotically stable if and only if its characteristic function $\theta$ 
    is $*$-inner. 
    \item[(ii)] $T_0^*$ is asymptotically stable if and only if its characteristic function 
    $\theta$ is inner. 
\end{itemize}
\end{prop}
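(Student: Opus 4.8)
The plan is to obtain (i) from (ii) by duality, and to prove (ii) by means of the Sz.-Nagy--Foias functional model. For the duality step, recall that the characteristic function of $T_0^*$ is $z\mapsto\theta(\overline z)^*$, and that, by definition, $\theta$ is $*$-inner exactly when $z\mapsto\theta(\overline z)^*$ is inner. Hence, applying (ii) to the c.n.u.\ contraction $T_0^*$ (whose adjoint is $T_0$), one gets that $T_0$ is asymptotically stable if and only if $z\mapsto\theta(\overline z)^*$ is inner, i.e.\ if and only if $\theta$ is $*$-inner, which is (i). So it suffices to establish (ii).

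To prove (ii) I would realize $T_0$ in the functional model. Let $\cK:=L^2(\fD_*)\oplus\overline{\Delta L^2(\fD)}$ and let $\cU$ be multiplication by the independent variable on $\cK$; this is the minimal unitary dilation of $T_0$ in functional-model form. With
\begin{align*}
\cM_+:=\bigl\{\theta w\oplus\Delta w:\ w\in H^2(\fD)\bigr\},\qquad
\cM_-:=\bigl(L^2(\fD_*)\ominus H^2(\fD_*)\bigr)\oplus\{0\},
\end{align*}
the model space is $\cH_\theta=\cK\ominus\cM_+\ominus\cM_-$ and $T_0=P_{\cH_\theta}\cU|_{\cH_\theta}$. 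The facts I would quote from \cite{SzNF2010} are: $\cM_+$ and $\cM_-$ are closed and mutually orthogonal; $\cU\cM_+\subset\cM_+$ and $\cU^*\cM_-\subset\cM_-$; the map $w\mapsto\theta w\oplus\Delta w$ is isometric (because $\|\theta(\xi)e\|^2+\|\Delta(\xi)e\|^2=\|e\|^2$ a.e.); and, since $\cU$ is the minimal unitary dilation, $T_0^{*n}=P_{\cH_\theta}\cU^{*n}|_{\cH_\theta}$ for all $n\ge0$.

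The computational heart is then short. Fix $x\in\cH_\theta$. Since $x\perp\cM_+$ and $\cU^n\cM_+\subset\cM_+$, the $\cM_+$-component of $\cU^{*n}x$ vanishes, so $\|T_0^{*n}x\|^2=\|x\|^2-\|P_{\cU^n\cM_-}x\|^2$ (moving $\cU^{*n}$ off the projection onto $\cM_-$). The subspaces $\cU^n\cM_-$ increase and $\bigvee_n\cU^n\cM_-=L^2(\fD_*)\oplus\{0\}$, so $\|P_{\cU^n\cM_-}x\|^2\uparrow\|P_{L^2(\fD_*)\oplus\{0\}}x\|^2$; hence $\|T_0^{*n}x\|^2\to\|P_{\{0\}\oplus\overline{\Delta L^2(\fD)}}x\|^2$. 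Therefore $T_0^*$ is asymptotically stable if and only if every $x\in\cH_\theta$ has zero $\overline{\Delta L^2(\fD)}$-component, i.e.\ if and only if $\{0\}\oplus\overline{\Delta L^2(\fD)}\subset\cM_+\oplus\cM_-$. Chasing this last inclusion: a vector $0\oplus g$ lies in $\cM_+\oplus\cM_-$ only if $g=\Delta w$ with $w\in H^2(\fD)$ and $\theta w=0$ (compare the $H^2(\fD_*)$- and $(L^2\ominus H^2)(\fD_*)$-components), and $\theta w=0$ forces $\Delta w=w$; so the inclusion is equivalent to $\overline{\Delta L^2(\fD)}=\{w\in H^2(\fD):\theta w=0\}$. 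The right side sits inside $H^2(\fD)$ while the left side reduces multiplication by $\xi$ on $L^2(\fD)$, and a subspace of $H^2(\fD)$ that reduces multiplication by $\xi$ lies in $\bigcap_k\xi^kH^2(\fD)=\{0\}$; so both are $\{0\}$, which means $\Delta(\xi)=0$ a.e., i.e.\ $\theta(\xi)^*\theta(\xi)=\bI$ a.e., i.e.\ $\theta$ is inner. The converse ($\theta$ inner forces $\Delta\equiv0$, making the inclusion trivial) is clear. This proves (ii).

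The step I expect to be the main obstacle is keeping the model geometry honest: one should resist the temptation to say ``$T_0^*$ is a backward shift on $H^2(\fD_*)$'', since $H^2(\fD_*)$ is not invariant under multiplication by $\overline\xi$ in $L^2(\fD_*)$. The decay of $T_0^{*n}x$ is instead governed by how fast the incoming subspaces $\cU^n\cM_-$ exhaust $L^2(\fD_*)$, and the residual $\overline{\Delta L^2(\fD)}$-component is exactly the obstruction to full decay. Everything else — the two invariance/orthogonality properties of $\cM_\pm$, the projection identities, and the elementary fact that $H^2$ contains no nonzero reducing subspace for multiplication by $\xi$ — is routine bookkeeping once the model of \cite{SzNF2010} has been set up correctly.
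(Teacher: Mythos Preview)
The paper does not give its own proof of this proposition: it is stated with a reference to \cite[Proposition VI.3.5]{SzNF2010} and used as a black box in the proofs of Corollaries \ref{c:2} and \ref{c:3}. So there is nothing to compare against on the paper's side.

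Your argument is correct and is essentially the standard functional-model proof from the cited reference. The duality reduction of (i) to (ii) via $\theta\ci{T_0^*}(z)=\theta(\overline z)^*$ is fine. In the model computation, the key identity $\|T_0^{*n}x\|^2=\|x\|^2-\|P\ci{\cU^n\cM_-}x\|^2$ is justified exactly as you indicate (using $\cU^n\cM_+\subset\cM_+$ and $x\perp\cM_+$), and the limit $\bigvee_n\cU^n\cM_-=L^2(\fD_*)\oplus\{0\}$ is clear. One small point worth stating explicitly: when you write ``$\theta w=0$ forces $\Delta w=w$'', this is because $\theta(\xi)w(\xi)=0$ a.e.\ gives $\Delta(\xi)^2 w(\xi)=w(\xi)$ a.e., and since $0\le\Delta(\xi)\le\bI$ this forces $w(\xi)$ into the $1$-eigenspace of $\Delta(\xi)$, hence $\Delta(\xi)w(\xi)=w(\xi)$ a.e.; the norm identity alone would only give $\|\Delta w\|=\|w\|$. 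With that clarified, the final step (a reducing subspace for $M_\xi$ contained in $H^2(\fD)$ must be $\{0\}$) is exactly right and yields $\Delta\equiv0$, i.e.\ $\theta$ inner.
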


\begin{proof}[Proof of Corollary \ref{c:2}]
This follows immediately from Corollary \ref{c:1} and the ``if'' direction of both items in 
Proposition \ref{p:SzNF}.
\end{proof}

\begin{proof}[Proof of Corollary \ref{c:3}]
Let $T$ be asymptotically stable.

Then its unitary part is trivial ($V=0$). In particular, the dimension function of its absolutely continuous part $N\ci{V}(\xi)$ is trivial, i.e.~$N\ci{V}(\xi)=0$ a.e.~$\xi\in \T.$

From the asymptotic stability of $T$ we further obtain that $T$ is a c.n.u.~contraction. In particular, $T=T_0$ is asymptotically stable. So Proposition \ref{p:SzNF} implies that $\theta$ is $*$-inner. Therefore, we have $\rk \Delta_*(\xi)=0$ a.e.~$\xi\in\T$ and so by \eqref{e:rank1} $\rk \Delta(\xi)=0$ a.e.~$\xi\in\T$.

Invoking \eqref{e:rank2}, we see that $N\ci{U}(\xi) = N\ci{V}(\xi)+ \rk \Delta(\xi)=0$ a.e.~$\xi\in\T$.
\end{proof}


\section{Appendix: Derivation of the characteristic function}\label{s:APP}
Mainly for the sake of self-containment, we include a proof of the formulas for the characteristic function in \eqref{e: char funct F1 01} and \eqref{e: char funct F1 02} following that of \cite[Theorem 4.2]{Liaw-Treil_APDE_2019} where the formula was proved in the matrix case. We also prove \eqref{e:F1} at the end of this section.

Recall that for a contraction $T$ its defect operators $D\ci T$ and $D\ci{T^*}$ are given by 
\begin{align*}
D\ci T = (\bI - T^*T)^{1/2}, \qquad D\ci{T^*} = (\bI - TT^*)^{1/2}, 
\end{align*}
and the defect spaces are defined as 
\begin{align*}
\fD\ci{T} = \clos\ran D\ci{T}, \qquad \fD\ci{T^*} = \clos\ran D\ci{T^*}.
\end{align*} 
Recall that according to \cite[Chapter VI]{SzNF2010} the abstract characteristic function $\wt \theta = \wt \theta\ci T$ of the operator $T$ is an analytic function in the unit disc $\D$ whose values are strict contractions $\wt \theta(z) : \fD\ci T\to \fD\ci{T^*}$ which is  given by the formula
\begin{align*}
\wt\theta\ci T (z) = 
(-T + z D\ci{T^*}(\bI\ci\cH-zT^*)^{-1}D\ci{T})\Bigm|_{\fD_T}.
\end{align*}
Usually in the literature the characteristic function is treated as the equivalence class of all functions obtained from $\wt\theta$ by right left  multiplication by constant unitary operators. But sometimes it is more convenient, as we will do now, to pick a concrete representation in this equivalence class. Namely, if $\fD$ and $\fD_*$ are abstract Hilbert spaces of appropriate dimensions, and 
\begin{align}
\label{e: coord operators}
V: \fD\ci{T} \to \fD, \qquad V_*: \fD\ci{T^*} \to \fD_*
\end{align}
are unitary operators (the so-called coordinate operators), then, according to e.g.~\cite[Theorem 1.2.8]{Nik-book-v2} or \cite[Theorem 1.11]{Nik-Vas_model_MSRI_1998}, the representation of the characteristic function corresponding to the identification \eqref{e: coord operators} is given by 
\begin{align}\label{e:theta}
\theta(z) &= V_* \wt\theta (z) V^* = V_* (-T + z D\ci{T^*}(\bI\ci\cH-zT^*)^{-1}D\ci{T})V^*
.
\end{align}

Consider a contraction $T=T\ci\Gamma = U + \bB (\Gamma-\bI)\bB^*U$ from \eqref{e: T_Gamma} where $\bB$ is an isometry acting from  $\fD$ to $\cH$. In this case 
\begin{align}\label{e:defects}
D\ci{T} = U^* \bB D\ci\Gamma \bB^*U, \qquad 
D\ci{T^*} = \bB D\ci{\Gamma^*} \bB^*.
\end{align}
If $\Gamma$ (and therefore $\Gamma^*$) is a strict contraction, the defect spaces are
\begin{align*}
\fD\ci T = \ran (U^*\bB) = U^*\ran\bB, \qquad \fD\ci{T^*} = \ran \bB, 
\end{align*}
so 
\begin{align}
\label{e: coord oper}
V= \bB^*U, \qquad V_* = \bB^*
\end{align}
is a natural choice for the coordinate operators (that is exactly the choice that was made in \cite{Liaw-Treil_APDE_2019}). Note, that in this case $\fD_*=\fD$.

By the definition of $T$ we get using \eqref{e: coord oper} that
\begin{align*}
V_* T V^*  = \bB^*T  U^*\bB\Bigm|_\fD = \bB^*\bB \Gamma\bB^*U  U^*\bB\Bigm|_\fD= \Gamma   ,  
\end{align*}
and therefore \eqref{e:theta} can be rewritten as 
\begin{align}\label{e:char}
\theta(z)
=
-\Gamma+\bB^* z D\ci{T^*}(\bI\ci\cH-zT^*)^{-1}D\ci{T}U^*\bB.
\end{align}
For much of the remainder of this section we include the space on which identity operators are acting on for clarification.

We continue to express the inverse  for $z\in \D$:
\begin{align*}
(\bI\ci\cH-zT^*)^{-1}
&=
\left((\bI\ci\cH-zU^*)[\bI\ci\cH - z (\bI\ci\cH-zU^*)^{-1} U^*\bB (\Gamma^* - \bI\ci\fD) \bB^*]\right)^{-1}
=
    X(z)^{-1}(\bI\ci\cH-zU^*)^{-1}\\
    \intertext{where}
    X(z)&:= \bI\ci\cH - z (\bI\ci\cH-zU^*)^{-1} U^*\bB (\Gamma^* - \bI\ci\fD) \bB^*.
\end{align*}
Here, we note that both $\bI\ci\cH-zT^*$ and $\bI\ci\cH-zU^*$ are invertible for $z\in \D$ (because  $\|zT^*\|, \|zU^*\|\le|z|<1$) so $X(z)$ is invertible as well.
To obtain the expression for $X(z)^{-1}$, we apply Lemma \ref{l:Woodbury} below with $P,Q:\fD\to\cH$ given by
$P=z (\bI\ci\cH-zU^*)^{-1} U^*\bB$ and $Q^*=(\Gamma^* - \bI\ci\fD) \bB^*$  
to get 
\begin{align*}
X(z)^{-1}  &=
    \bI\ci\cH + z(\bI\ci\cH-zU^*)^{-1}U^*\bB\Bigl[\bI\ci\fD - z(\Gamma^* - \bI\ci\fD)\bB^*(\bI\ci\cH-zU^*)^{-1}U^*\bB\Bigr]^{-1}(\Gamma^* - \bI\ci\fD)\bB^* ;
\end{align*}
note that Lemma \ref{l:Woodbury} also implies that the expression in brackets is invertible for $z\in\D$. 

Recalling that $F_1(z) = z\bB^*(\bI\ci\cH - zU^*)^{-1}U^*\bB$ by \eqref{e:F1} we obtain
\begin{align}\label{e:IMinuszTinverse}
(\bI\ci\cH-zT^*)^{-1}
&=
(\bI\ci\cH-zU^*)^{-1}
\notag\\ 
&+
z(\bI\ci\cH-zU^*)^{-1}U^*\bB \Bigl[\bI\ci\fD - (\Gamma^* - \bI\ci\fD)F_1(z) \Bigr]^{-1}(\Gamma^* - \bI\ci\fD)\bB^*(\bI\ci\cH-zU^*)^{-1};
\end{align}
the expression in brackets is invertible for $z\in\D$, because it is just 
the expression in brackets is the above formula for $X(z)^{-1}$. 

Now we substitute \eqref{e:IMinuszTinverse} and \eqref{e:defects} into \eqref{e:char}, and again use that $F_1(z) = z\bB^*(\bI\ci\cH - zU^*)^{-1}U^*\bB$. After straightforward but somewhat tedious calculations we arrive at
\begin{align*}
\theta(z)
&=
-\Gamma+ D\ci{\Gamma^*}\Bigl(F_1(z)+F_1(z)\Bigl[\bI\ci\fD - (\Gamma^* - \bI\ci\fD)F_1(z)\Bigr]^{-1}(\Gamma^* - \bI\ci\fD)F_1(z)\Bigr)D\ci{\Gamma}\\
&=-\Gamma+ D\ci{\Gamma^*}F_1(z)\Bigl[\bI\ci\fD - (\Gamma^* - \bI\ci\fD)F_1(z)\Bigr]^{-1}\Bigl(\bI\ci\fD - (\Gamma^* - \bI\ci\fD)F_1(z)+(\Gamma^* - \bI\ci\fD)F_1(z)\Bigr)D\ci{\Gamma}\\
&= -\Gamma  +   D\ci{\Gamma^*} F_1(z) \Bigl[ \bI\ci\fD -(\Gamma^* - \bI\ci\fD)F_1(z)\Bigr]^{-1}  D\ci\Gamma,
\end{align*}
which is exactly \eqref{e: char funct F1 01}.

Equation \eqref{e: char funct F1 02} is an immediate consequence of \eqref{e: char funct F1 01}. Indeed, we clearly have
\begin{align*}
    \Bigl[\bI\ci\fD -F_1(z)(\Gamma^* - \bI\ci\fD)\Bigr]F_1(z)=
    F_1(z)\Bigl[\bI\ci\fD -(\Gamma^* - \bI\ci\fD)F_1(z)\Bigr],
\end{align*}
or, equivalently,
\begin{align*}
    F_1(z)\Bigl[\bI\ci\fD -(\Gamma^* - \bI\ci\fD)F_1(z)\Bigr]^{-1}=
    \Bigl[\bI\ci\fD -F_1(z)(\Gamma^* - \bI\ci\fD)\Bigr]^{-1}F_1(z).
\end{align*}

The following lemma, which we just used above, can be considered as a particular case of the so-called Woodbury inversion formula, \cite{Wood}, although formally in \cite{Wood} only the case of matrices was treated.

\begin{lm}\label{l:Woodbury}
Let $\cK$ be a separable Hilbert space and consider operators $P,Q: \cK \to \cH$. Operators $\bI\ci\cH - PQ^*$ and $\bI\ci\cK - Q^*P$ are simultaneously invertible. In this case, we have the inversion formula
\begin{align*}
(\bI\ci\cH - PQ^*)^{-1}
=
\bI\ci\cH + P (\bI\ci\cK - Q^*P)^{-1} Q^*.
\end{align*}
\end{lm}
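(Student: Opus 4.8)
The plan is to prove everything by direct algebraic manipulation, with no appeal to spectral theory. The heart of the matter is the following pair of one-directional statements, each verified by multiplying out: \emph{(a)} if $S:=\bI\ci\cK-Q^*P$ is invertible, then $\bI\ci\cH+PS^{-1}Q^*$ is a two-sided inverse of $\bI\ci\cH-PQ^*$; and \emph{(b)} if $R:=\bI\ci\cH-PQ^*$ is invertible, then $\bI\ci\cK+Q^*R^{-1}P$ is a two-sided inverse of $\bI\ci\cK-Q^*P$. Statement \emph{(b)} is nothing but \emph{(a)} with the roles of $P$ and $Q^*$ (and of $\cH$ and $\cK$) interchanged, so it suffices to establish \emph{(a)}.

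To prove \emph{(a)}, first compute
\begin{align*}
(\bI\ci\cH-PQ^*)(\bI\ci\cH + PS^{-1}Q^*)
&= \bI\ci\cH - PQ^* + P\bigl(S^{-1} - Q^*PS^{-1}\bigr)Q^* \\
&= \bI\ci\cH - PQ^* + P\bigl((\bI\ci\cK - Q^*P)S^{-1}\bigr)Q^* = \bI\ci\cH,
\end{align*}
where in the last step we used $(\bI\ci\cK-Q^*P)S^{-1} = SS^{-1} = \bI\ci\cK$. The product $(\bI\ci\cH + PS^{-1}Q^*)(\bI\ci\cH-PQ^*)$ is handled the same way, this time factoring $S^{-1}$ out on the left and using $S^{-1}S=\bI\ci\cK$. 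Hence $\bI\ci\cH-PQ^*$ is invertible with the asserted inverse, which is \emph{(a)}.

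With \emph{(a)} and \emph{(b)} in hand the lemma follows immediately: by \emph{(b)}, invertibility of $\bI\ci\cH-PQ^*$ forces invertibility of $\bI\ci\cK-Q^*P$, and by \emph{(a)}, invertibility of $\bI\ci\cK-Q^*P$ forces invertibility of $\bI\ci\cH-PQ^*$; thus the two operators are simultaneously invertible, and when they are, \emph{(a)} supplies the formula $(\bI\ci\cH-PQ^*)^{-1} = \bI\ci\cH + P(\bI\ci\cK-Q^*P)^{-1}Q^*$.

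I do not expect a genuine obstacle here — the statement is a routine operator identity — but two small points warrant care. First, one must track which identity operator, $\bI\ci\cH$ or $\bI\ci\cK$, appears at each stage, since $P$ and $Q^*$ map between two different spaces. Second, the word ``simultaneously'' really does require both implications \emph{(a)} and \emph{(b)}: a single inversion formula only yields one direction. (Alternatively, one could deduce simultaneous invertibility from the classical fact that $PQ^*$ and $Q^*P$ have the same non-zero spectrum, but the two-sided verification above keeps the proof self-contained.)
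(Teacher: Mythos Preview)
Your proof is correct and follows essentially the same approach as the paper: direct multiplication to verify that $\bI\ci\cH + P(\bI\ci\cK-Q^*P)^{-1}Q^*$ is a two-sided inverse of $\bI\ci\cH - PQ^*$, followed by the observation that swapping the roles of $P$ and $Q^*$ (and of $\cH$ and $\cK$) gives the reverse implication. Your explicit labeling of the two directions as \emph{(a)} and \emph{(b)} is a slightly cleaner presentation, but the content is the same.
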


\begin{proof}
Assume that $\bI\ci\cK - Q^*P$ is invertible and compute
\begin{align*}
    (\bI\ci\cH - PQ^*)( \bI\ci\cH & + P (\bI\ci\cK - Q^*P)^{-1} Q^*)\\
    &=
    \bI\ci\cH- PQ^*+ P (\bI\ci\cK - Q^*P)^{-1} Q^* - PQ^*P (\bI\ci\cK - Q^*P)^{-1} Q^*\\
    &=
    \bI\ci\cH+ P 
    \left(-\bI\ci\cK +(\bI\ci\cK - Q^*P)(\bI\ci\cK - Q^*P)^{-1}\right)
    Q^*\\
&=\bI\ci\cH.
\end{align*}
So, $\bI\ci\cH + P (\bI\ci\cK - Q^*P)^{-1} Q^*$ is the right inverse of $\bI\ci\cH - PQ^*.$ To show that it is also the left inverse (and therefore the inverse), one can either reduce $(\bI\ci\cH + P (\bI\ci\cK - Q^*P)^{-1} Q^*)(\bI\ci\cH - PQ^*)$ in analogy, or simply take the adjoint of the above computation and then swap the roles of $P$ and $Q$.

Vice versa, to prove the invertibility of $\bI\ci\cK - Q^*P$ from that of $\bI\ci\cH - PQ^*,$ we simply swap the roles of $P$ and $Q^*$ and those of $\cH$ and $\cK$, respectively, and apply the formulas we just proved.
\end{proof}

\begin{proof}[Proof of Equation \eqref{e:F1}]

This identity follows easily from the definition \eqref{e: bmu 03} of the operator-valued measure $\bmu$: Indeed, since
\begin{align*}
zU^* (\bI-zU^*)^{-1} = (\bI - zU^*)^{-1} -\bI, 
\end{align*}
we see that 
\begin{align*}
z\bB^*(\bI- zU^*)^{-1}U^*\bB & = \bB^* (\bI - zU^*)^{-1} \bB - \bB^*\bB \\
 & = \int_\T \frac{\dd\bmu(\xi)}{1-z\overline \xi} - \int_\T \dd\bmu(\xi) = \int_\T \frac{z\overline \xi}{1-z\overline \xi} \dd\bmu(\xi).
\end{align*}
\end{proof}

\begin{rem*}
Equation \eqref{e:F1} also follows via a ``high brow'' approach invoking the functional calculus. Namely,  for a rational function $\varphi$ (with no poles on $\T$), equation \eqref{e: bmu 03} implies $\bB^*\varphi(U)\bB = \int_\T\varphi(\xi) \dd\bmu(\xi)$. Taking  
\[
\varphi(\xi) =\varphi_z(\xi) =  \frac{z\xi^{-1}}{1-z\xi^{-1}}, 
\]
and using the fact that $\xi^{-1}=\bar\xi$ for $\xi\in\T$, we immediately obtain \eqref{e:F1}. 
\end{rem*}


\providecommand{\bysame}{\leavevmode\hbox to3em{\hrulefill}\thinspace}
\providecommand{\MR}{\relax\ifhmode\unskip\space\fi MR }
\providecommand{\MRhref}[2]{%
	\href{http://www.ams.org/mathscinet-getitem?mr=#1}{#2}
}
\providecommand{\href}[2]{#2}

\end{document}